\newcommand{\studentname}{Oliver Hinder}
\newcommand{\suid}{ohinder}
\algnewcommand{\IfR}[1]{\State\algorithmicif\ #1\ \algorithmicthen}
\algnewcommand{\EndIfR}{\unskip\ \algorithmicend\ \algorithmicif}
\theoremstyle{plain}
\newtheorem{theorem}{Theorem}
\newtheorem{assumption}{Assumption}
\newtheorem{observation}[theorem]{Observation}
\newtheorem{corollary}[theorem]{Corollary}
\newtheorem{definition}[theorem]{Definition}
\newtheorem{fact}[theorem]{Fact}
\newcommand{\matStart}{\begin{pmatrix}}
\newcommand{\matEnd}{\end{pmatrix}}
\def\R{R}
\def\MeritComp{\mathcal{\zeta}}
\def\Lag{\mathcal{L}}
\newcommand{\callAlgTrust}[1]{\hyperref[AlgTrust]{\Call{\AlgTrust}{#1}}}
\newcommand{\callAlgMain}[1]{\hyperref[AlgMain]{\Call{\AlgMain}{#1}}}
\begin{document}

\title{A one-phase interior point method for nonconvex optimization}
\author{Oliver Hinder, Yinyu Ye}

\algdef{SE}[SUBALG]{Indent}{EndIndent}{}{\algorithmicend\ }%
\algtext*{Indent}
\algtext*{EndIndent}

\maketitle

\newcommand{\fracBound}{fraction-to-boundary}
\newcommand{\kStart}{k_{\text{start}}}
\newcommand{\kEnd}{k_{\text{end}}}

\newcommand{\algorithmicbreak}{\textbf{break}}
\newcommand{\obj}{f}
\newcommand{\cons}{a}
\newcommand{\hess}{\grad^2}
\newcommand{\nvar}{n}
\newcommand{\ncon}{m}

\newcommand\figDir[1]{#1} % arXiv

\renewcommand{\vec}[1]{#1}

\newcommand{\eye}{I}
\newcommand{\ones}{\vec{e}}
\newcommand{\dir}[1]{\vec{d}_{\vec{#1}}}
\newcommand{\Dir}[1]{D_{\vec{#1}}}

\newcommand{\LDL}{LDL}
\newcommand{\LBL}{LBL}

%%%%%%%%%%%%%%%%%%%%%%%%%%
%%%%%%%%%%%%%%%%%%%%%%%%%%
%% BEGIN ALG PARS
%%%%%%%%%%%%%%%%%%%%%%%%%%
%%%%%%%%%%%%%%%%%%%%%%%%%%

\newcommand{\parNumCor}{j_{\max}}
\newcommand{\parNumCorValue}{2}

%% beta

%\newcommand{\parRegularizer}{\beta_{1}}
%\newcommand{\parRegularizerValue}{10^{-8}}
%\newcommand{\parRegularizerInterval}{(0,\infty)}

\newcommand{\parConRegularizer}{\beta_{1}}
\newcommand{\parConRegularizerValue}{10^{-4}}
\newcommand{\parConRegularizerInterval}{(0,1)}

\newcommand{\parComp}{\beta_{2}}
\newcommand{\parCompValue}{0.01}
\newcommand{\parCompInterval}{(0,1)}

\newcommand{\parCompAgg}{\beta_{3}}
\newcommand{\parCompAggValue}{0.02}
\newcommand{\parCompAggInterval}{(\parComp,1)}

\newcommand{\parObjReductFactor}{\beta_{4}}
\newcommand{\parObjReductFactorValue}{0.2}
\newcommand{\parObjReductFactorInterval}{(0,1)}

\newcommand{\parMinStableStepSize}{\beta_{5}}
\newcommand{\parMinStableStepSizeValue}{2^{-5}}
\newcommand{\parMinStableStepSizeInterval}{(0,1)}

\newcommand{\parKKTReductFactor}{\beta_{7}}
\newcommand{\parKKTReductFactorValue}{0.01}
\newcommand{\parKKTReductFactorInterval}{(0,1)}

\newcommand{\parBacktracking}{\beta_{6}}
\newcommand{\parBacktrackingValue}{0.5}
\newcommand{\parBacktrackingInterval}{(0,1)}

\newcommand{\parFracBoundaryExp}{\beta_{7}}
\newcommand{\parFracBoundaryExpValue}{0.5}
\newcommand{\parFracBoundaryExpInterval}{(0,1)}

\newcommand{\parAggProtect}{\beta_{8}}
\newcommand{\parAggProtectValue}{0.9}
\newcommand{\parAggProtectInterval}{(0.5,1)}

\newcommand{\parFracBoundary}{\Theta^{b}}
\newcommand{\parFracBoundaryValue}{0.1}
\newcommand{\parFracBoundaryInterval}{(0,1)}

\newcommand{\parFracBoundaryMax}{\Theta^{p}}
\newcommand{\parFracBoundaryMaxValueLinear}{0.1}
\newcommand{\parFracBoundaryMaxValueNL}{0.25}
\newcommand{\parFracBoundaryMaxInterval}{[\parFracBoundary_{i,i}, 1)}

\newcommand{\parInitialize}{\beta_{10}}
\newcommand{\parInitializeValue}{10^{-4}}
\newcommand{\parInitializeInterval}{(0,1)}

\newcommand{\parInitializeMin}{\beta_{11}}
\newcommand{\parInitializeMinValue}{10^{-2}}
\newcommand{\parInitializeMinInterval}{(0,\infty)}

\newcommand{\parInitializeMax}{\beta_{12}}
\newcommand{\parInitializeMaxValue}{10^{3}}
\newcommand{\parInitializeMaxInterval}{[\parInitializeMin,\infty)}

\newcommand{\parMuScale}{\mu_{\text{scale}}}
\newcommand{\parMuScaleValue}{1.0}
\newcommand{\parMuScaleInterval}{(0,\infty)}

\newcommand{\minStepFunc}{\theta}

%%%%%%%%%%%%%%%%%%%%%

%\newcommand{\parFilterReduceBarrier}{\beta_{13}}
%\newcommand{\parMaxRatioPrimalMu}{\beta_{14}}

\newcommand{\parDeltaMin}{\Delta_{\min}}
\newcommand{\parDeltaMinValue}{10^{-8}}
\newcommand{\parDeltaMinInterval}{(0,\infty)}

\newcommand{\parDeltaIncreaseFailure}{\Delta_{\text{inc}}}
\newcommand{\parDeltaIncreaseFailureValue}{8}
\newcommand{\parDeltaIncreaseFailureInterval}{(1,\infty)}

\newcommand{\parDeltaDecrease}{\Delta_{\text{dec}}}
\newcommand{\parDeltaDecreaseValue}{\pi}
\newcommand{\parDeltaDecreaseInterval}{(1,\infty)}

\newcommand{\parDeltaMax}{\Delta_{\max}}
\newcommand{\parDeltaMaxValue}{10^{50}}
\newcommand{\parDeltaMaxInterval}{(\parDeltaMin,\infty)}

\newcommand{\deltaPrev}{\delta_{\text{prev}}}
\newcommand{\parDeltaStart}{\Delta_{\text{start}}}

\newcommand{\parMaxItValue}{3,000}

% tol
\newcommand{\TOLopt}{\epsilon_{\textbf{opt}}}
\newcommand{\TOLoptValue}{10^{-6}}
\newcommand{\TOLoptInterval}{(0,\infty)}

\newcommand{\TOLinf}{\epsilon_{\textbf{inf}}}
\newcommand{\TOLinfValue}{10^{-6}}
\newcommand{\TOLinfInterval}{(0,1)}

\newcommand{\TOLinfOne}{\epsilon_{\textbf{far}}}
\newcommand{\TOLinfOneValue}{10^{-3}}
\newcommand{\TOLinfOneInterval}{(0,1)}

\newcommand{\TOLinfTwo}{\epsilon_{\textbf{inf}}}
\newcommand{\TOLinfTwoValue}{10^{-6}}
\newcommand{\TOLinfTwoInterval}{(0,1)}

\newcommand{\TOLall}{\epsilon}

\newcommand{\infeasFuncOne}{\infeasFunc_{\text{far}}}
\newcommand{\infeasFuncTwo}{\infeasFunc_{\text{inf}}}

\newcommand{\TOLunbounded}{\epsilon_{\textbf{unbd}}}
\newcommand{\TOLunboundedValue}{10^{-12}}
\newcommand{\TOLunboundedInterval}{(0,1)}

%%%%%%%%%%%%%%%%%%%%%%%%%%
%%%%%%%%%%%%%%%%%%%%%%%%%%
%% END ALG PARS
%%%%%%%%%%%%%%%%%%%%%%%%%%
%%%%%%%%%%%%%%%%%%%%%%%%%%

%\newcommand{\Filter}{\mathbb{F}}
\newcommand{\History}{\mathbb{H}}
\newcommand{\conWeight}{\vec{w}}
\newcommand{\ConWeight}{W}
\newcommand{\zeroSet}{Z}
\newcommand{\nonzeroSet}{N}
\newcommand{\alphaMinAgg}{\alpha_{\min}}
\newcommand{\barrier}{\psi}
\newcommand{\regularizer}{r}
\renewcommand{\R}{\mathbb{R}}
\newcommand{\Schur}{\mathcal{M}}
\newcommand{\MatrixSchur}[2]{\mathcal{H}_{#1}(#2)}

\newcommand{\termination}{\eqref{terminate-kkt}, \eqref{terminate-primal-infeasible} or \eqref{terminate-dual-infeasible}}
\newcommand{\infeasFunc}{\Gamma}

\newcommand{\maxgrad}{G}

\newcommand{\status}{\textbf{status}}
\newcommand{\success}{\textsc{success}}
\newcommand{\failure}{\textsc{failure}}
\newcommand{\maxDelta}{\textsc{max-delta}}

\newcommand{\feasible}{\textbf{feasible}}

\newcommand{\meritKKT}{\mathbb{K}}

\newlength\myindent % define a new length \myindent
\setlength\myindent{6em} % assign the length 2em to \myindet
\newcommand\bindent{%
  \begingroup % starts a group (to keep changes local)
  \setlength{\itemindent}{\myindent} % set itemindent (algorithmic internally uses a list) to the value of \mylength
  \addtolength{\algorithmicindent}{\myindent} % adds \mylength to the default indentation used by algorithmic
}
\newcommand\eindent{\endgroup} % closes a group

% alg
\newcommand{\simpleIPM}{Simplified-One-Phase-nonconvex-IPM}
\newcommand{\callSimpleIPM}{\Call{Simplified-One-Phase-nonconvex-IPM}}

\newcommand{\backtrackBlurb}{\emph{Perform a backtracking line search on the primal step $\alpha_{P}$.} Trial step sizes $\alpha_{P} \in \{\alpha^{\max}_{P}, \parBacktracking \alpha^{\max}_{P}, \parBacktracking^2 \alpha^{\max}_{P}, \dots \}$ computing the trial point $(\mu^{+}, x^{+}, s^{+}, y^{+})$ as described in \eqref{eq:primal-iterate-update} and \eqref{eq:alpha-D}. Terminate with $\status = \success$ and return the trial point the first time each of the following conditions hold:}

\begin{abstract}
The work of \citet*{wachter2000failure} suggests that infeasible-start interior point methods (IPMs) developed for linear programming cannot be adapted to nonlinear optimization without significant modification, i.e., using a two-phase or penalty method. We propose an IPM that, by careful initialization and updates of the slack variables, is guaranteed to find a first-order certificate of local infeasibility, local optimality or unboundedness of the (shifted) feasible region. Our proposed algorithm differs from other IPM methods for nonconvex programming because we reduce primal feasibility at the same rate as the barrier parameter. This gives an algorithm with more robust convergence properties and closely resembles successful algorithms from linear programming. We implement the algorithm and compare with IPOPT on a subset of CUTEst problems. Our algorithm requires a similar median number of iterations, but fails on only 9\%  of the problems compared with 16\% for IPOPT. Experiments on infeasible variants of the CUTEst problems indicate superior performance for detecting infeasibility. 

The code for our implementation can be found at \url{https://github.com/ohinder/OnePhase}.
\end{abstract}

\section{Introduction}

Consider the problem
\begin{subequations}\label{original-problem} 
\begin{flalign}
\min_{x \in \R^{\nvar}}{\obj(x)} \\
\cons(x) \le 0,
\end{flalign}
\end{subequations}
where the functions $a : \R^{\nvar} \rightarrow \R^{\ncon}$ and $f : \R^{\nvar} \rightarrow \R$ are twice differentiable and might be nonconvex. Examples of real-world problems in this framework include truss design, robot control, aircraft control, and aircraft design, e.g., the problems TRO11X3, ROBOT, AIRCRAFTA, AVION2 in the CUTEst test set \cite{gould2015cutest}. This paper develops an interior point method (IPM) for finding KKT points of \eqref{original-problem}, i.e., points such that
\begin{subequations}\label{kkt-of-original-problem} 
\begin{flalign}
\grad \obj(x) + \grad \cons(x)^T y &= 0 \\
y^T \cons(x) &= 0 \\
\cons(x) &\le 0, \quad y \ge 0.
\end{flalign}
\end{subequations}
IPMs were first developed by \citet*{karmarkar1984new} for linear programming. The idea for primal-dual IPMs originates with \citet*{megiddo1989pathways}. Initially, algorithms that required a feasible starting point were studied \cite{kojima1989primal,monteiro1989interior}. However, generally one is not given an initial point that is feasible. A naive solution to this issue is to move the constraints into the objective  by adding a large penalty for constraint violation (Big-M method) \cite{mcshane1989implementation}. A method to avoid the penalty approach, with a strong theoretical foundation for linear programming, is the homogeneous algorithm \cite{andersen1998computational,andersen1999homogeneous,ye1994nl}. This algorithm measures progress in terms of the KKT error, which may not monotonically decrease in the presence of nonconvexity\footnote{This occurs even in the one-dimensional unconstrained case, e.g., consider minimizing $f(x)=-9 x - 3 x^2 + x^4/4$ starting from zero; the gradient norm increases then decreases.}. It is therefore difficult to generalize the homogeneous algorithm to nonconvex optimization. An alternative to the homogeneous algorithm is the infeasible-start algorithm of Lustig \cite{lustig1990feasibility}, which has fewer numerical issues and a smaller iteration count than the big-M method of \cite{mcshane1989implementation}. 
Lustig's approach was further improved in the predictor-corrector algorithm of \citet*{mehrotra1992implementation}. This algorithm reduced complementarity, duality and primal feasibility at the same rate, using an adaptive heuristic. This class of methods was shown by \citet*{todd2003detecting} to converge to optimality or infeasibility certificates (of the primal or dual). 

The infeasible-start method for linear programming of \citet{lustig1990feasibility} naturally extends to nonlinear optimization \cite{kortanek1997infeasible},
and most interior point codes for nonconvex optimization are built upon these ideas \cite{byrd2006knitro,vanderbei1999loqo,wachter2006implementation}. However, \citet*{wachter2000failure} showed that for the problem
\begin{subequations}\label{failure-ex}
\begin{flalign}
\min { x }\\
x^2 - s_{1} &= -1 \\
x - s_{2} &= 1 \\
s_1, s_2 &\ge 0
\end{flalign}
\end{subequations}
a large class of infeasible-start algorithms fail to converge to a local optimum or an infeasibility certificate starting at any point with $x < 0$, $s_{1} > 0$ and $s_{2} > 0$. Following that paper, a flurry of research was published suggesting different methods for resolving this issue \cite{benson2004interior,chen2006interior,curtis2012penalty,gould2015interior,liu2004robust,wachter2006implementation}. The two main approaches can be split into penalty methods \cite{chen2006interior,curtis2012penalty,gould2015interior,liu2004robust} and two-phase algorithms \cite{wachter2006implementation}. 

Penalty methods move some measure of constraint violation into the objective. These methods require a penalty parameter that measures how much the constraint violation contributes to the objective. 
Penalty methods will converge only if the penalty parameter is sufficiently large. However, estimating this value is difficult: too small and the algorithm will not find a feasible solution; too big and the algorithm might be slow and suffer from numerical issues. Consequently, penalty methods tend to be slow \cite[Algorithm 1]{curtis2012penalty} or use complex schemes for dynamically updating the penalty parameter \cite[Algorithm 2]{curtis2012penalty}. 

The algorithm IPOPT is an example of a two-phase algorithm: it has a main phase and a feasibility restoration phase  \cite{wachter2006implementation}. The main phase searches simultaneously for optimality and feasibility using a classical infeasible-start method. The feasibility restoration phase aims to minimize primal infeasibility. It is called only when the main phase fails, e.g., the step size is small. It is well known that this approach has drawbacks. The algorithm has difficulties detecting infeasibility \cite[Table 15]{huang2016solution} and will fail if the feasibility restoration phase is called too close to the optimal solution. Some of these issues have been addressed by \citet*{nocedal2014interior}. 

Our main contribution is an infeasible-start interior point method for nonlinear programming that builds on the work for linear programming of \citet{lustig1990feasibility}, \citet{mehrotra1992implementation}, and \citet*{mizuno1993adaptive}. The algorithm avoids a big-M or a two-phase approach. Furthermore, our solution to the issue posed in example \eqref{failure-ex} is simple: we carefully initialize the slack variables and use nonlinear updates to carefully control the primal residual. Consequently, under general conditions we guarantee that our algorithm will converge to a local certificate of optimality, local infeasibility or unboundedness. Our algorithm has other desirable properties. Complementarity moves at the same rate as primal feasibility. This implies from the work of \citet*{haeser2017behavior} that if certain sufficient conditions for local optimality conditions hold, our approach guarantees that the dual multipliers sequence will remain bounded. In contrast, in methods that reduce the primal feasibility too quickly, such as IPOPT,  the dual multiplier sequence can be unbounded even for linear programs. We compare our solver with IPOPT on a subset of CUTEst problems. Our algorithm has a similar median number of iterations to IPOPT, but fails less often. Experiments on infeasible variants of the CUTEst problems indicate superior performance of our algorithm for detecting infeasibility.

The paper is structured as follows. Section~\ref{sec:simple-alg} describes a simple version of our proposed one-phase interior point algorithm and gives intuitive explanations for our choices. Section~\ref{sec:theory} focuses on convergence proofs. In particular, Section~\ref{sec:infeas-criteron-justify} justifies the choice of infeasibility criterion, and Section~\ref{sec:global-conv} provides convergence proofs for the algorithm described in Section~\ref{sec:simple-alg}. Section~\ref{sec:practical-alg} gives a practical version of the one-phase algorithm. Section~\ref{sec:numerical-results} presents numerical results on the CUTEst test set.

\paragraph{Notation} We use the variables $x$, $s$ and $y$ to denote the primal, slack and dual variables produced by the algorithm. The diagonal matrices $S$ and $Y$ are formed from the vectors $s$ and $y$ respectively. Given two vectors $u$ and $v$, $\min\{ u, v \}$ is vector corresponding to the element-wise minimum. The norm $\norm{ \cdot }$ denotes the Euclidean norm. The letter $\ones$ represents the vector of all ones. The set $\R_{++}$ denotes the set of strictly positive real numbers.

\section{A simple one-phase algorithm}\label{sec:simple-alg}

Consider the naive log barrier subproblems of the form
\begin{flalign} \label{naive-log-barrier}
\min_{x \in \R^{\nvar}}{\obj(x) - \mu \sum_i{ \log{(-a_i(x))} } }.
\end{flalign}
The idea is to solve a sequence of such subproblems with $\mu \rightarrow 0$ and $\mu > 0$. The log barrier transforms the non-differentiable original problem~\eqref{original-problem} into a twice differentiable function on which we can apply Newton's method. However, there are issues with this naive formulation: we are rarely given a feasible starting point and one would like to ensure that the primal variables remain bounded. To resolve these issues we consider shifted and slightly modified subproblems of the form

\begin{flalign*}
\min_{x \in \R^{\nvar}} \barrier_{\mu}(x) := \obj(x) - \mu  \sum_i{ \left( \parConRegularizer \cons_i(x) + \log \left( \mu \conWeight_i - \cons_i(x)  \right) \right)  }, %
\end{flalign*}
where $\parConRegularizer \in \parConRegularizerInterval$ is a constant with default value $\parConRegularizerValue$, $\conWeight \ge 0$ is a vector that remains fixed for all subproblems, and some $\mu > 0$ measures the size of the shift. The purpose of the term $\parConRegularizer \cons_i(x)$ is to ensure that $-(\parConRegularizer \cons_i(x) + \log \left( \mu \conWeight_i - \cons_i(x) \right) )$ remains bounded below. This prevents the primal iterates from unnecessarily diverging. We remark that this modification of the log barrier function is similar to previous works \cite[Section 3.7]{wachter2006implementation}.

Holistically, our technique consists of computing two types of direction: stabilization and aggressive directions. Both directions are computed from the same linear system with different right-hand sides. Aggressive directions are equivalent to affine scaling steps \cite{mehrotra1992implementation} as they apply a Newton step directly to the KKT system, ignoring the barrier parameter $\mu$. Aggressive steps aim to approach optimality and feasibility simultaneously. However, continuously taking aggressive steps may cause the algorithm to stall or fail to converge. To remedy this we have a stabilization step that keeps the primal feasibility the same, i.e., aims to reduce the log barrier objective until an approximate solution to the shifted log barrier problem is found. While this step has similar goals to the centering step of Mehrotra, there are distinct differences. The centering steps of Mehrotra move the iterates towards the central path while keeping the primal and dual feasibility fixed. Our stabilization steps only keep the primal feasibility fixed while reducing the log barrier objective. This technique of alternating stabilization and aggressive steps, is analogous to the alternating predictor-corrector techniques of \citet*[Algorithm~1]{mizuno1993adaptive}.

The IPM that we develop generates a sequence of iterates $(x^{k},s^k, y^k, \mu^k)$ that satisfy
\begin{subequations}\label{eq:barrier-primal-sequence-nice}
\begin{flalign}
(x^{k},s^k, y^k, \mu^k) &\in \R^{\nvar} \times \R_{++}^{\ncon} \times \R_{++}^{\ncon} \times  \R_{++} \label{eq-domain} \\
\frac{s^{k}_i y^{k}_i}{\mu^{k}} &\in [ \parComp, 1 / \parComp] ~~ \forall i \in \{ 1, \dots, m \} \label{eq:comp-slack} \\
\cons(x^{k}) + s^{k} &= \mu^k \conWeight,  \label{eq:primal-feasibility} 
\end{flalign} 
\end{subequations}
where $\conWeight \ge 0$ is a vector for which $\cons(x^{0}) + s^{0} = \mu^0 \conWeight$, and $\parComp \in (0,1)$ is an algorithmic parameter with default value $\parCompValue$. This set of equations implies the primal feasibility and complementarity are moved at the same rate. 

Furthermore, there is a subsequence of the iterates $\pi_{k}$ (i.e., those that satisfy the aggressive step criterion \eqref{agg-criteron}) such that %

\begin{flalign}
\frac{\| \grad_{x} \Lag_{\mu^{\pi_{k}}}(x^{\pi_{k}}, y^{\pi_{k}}) \|_{\infty}}{\mu^{\pi_{k}}(\| y^{\pi_{k}} \|_{\infty} + 1)} &\le c, \label{eq:dual-feas}
\end{flalign}
where $c > 0$ is some constant and $\Lag_{\mu} (x, y) := \obj(x) + (y - \mu \parConRegularizer e)^T \cons(x)$ is the modified Lagrangian function. Requiring \eqref{eq:barrier-primal-sequence-nice} and \eqref{eq:dual-feas} is common in practical linear programming implementations \cite{mehrotra1992implementation}. Note that \eqref{eq:barrier-primal-sequence-nice} and \eqref{eq:dual-feas} can be interpreted as a `central sequence'. This is weaker than the existence of a central path, a concept from convex optimization \cite{andersen1999homogeneous,megiddo1989pathways}. Unfortunately, in nonconvex optimization there may not exist a continuous central path.

Conditions \eqref{eq:barrier-primal-sequence-nice} and \eqref{eq:dual-feas} are desirable because they imply the dual multipliers are likely to be well-behaved. To be more precise, assume the subsequence satisfying \eqref{eq:barrier-primal-sequence-nice} and \eqref{eq:dual-feas} is converging to a feasible solution. If this solution satisfies certain sufficiency conditions for local optimality, then the dual variables remain bounded and strict complementarity holds. We refer to our paper \cite{haeser2017behavior} for further understanding of this issue. A consequence of this property is that we can re-write equality constraints as two inequalities while avoiding numerical issues that might arise if we did this using other solvers \cite{haeser2017behavior}.

Related to this property of the dual multipliers being well-behaved is that our algorithm is explicitly designed for inequality constraints only. Often primal feasibility is written as $a(x)=0$, $x \ge 0$ as used by Mehrotra (and many others in the IPM literature). If we took the dual of a linear program before applying our technique then our method would be working with the same problem as the typical method, because for us dual feasibility is $\grad \obj(x) + \grad \cons(x)^T y = 0, y \ge 0$. We believe working in this form is superior for nonlinear programming where there is no symmetry between the primal and dual, and the form $a(x) \le 0$ has many benefits that we discuss shortly. Using inequalities has the following advantages:
\begin{enumerate}
\item It enables us to generate sequences satisfying \eqref{eq:barrier-primal-sequence-nice} and \eqref{eq:dual-feas}.
\item It allows us to use a Cholesky factorization instead of the  \LBL{} factorization of \citet*{bunch1971direct}. See end of Section~\ref{sub:direction-computation} for further discussion.
\item We avoid the need for a second inertia modification parameter to ensure nonsingularity of the linear system, i.e., $\delta_{c}$ in equation~(13) in \cite{wachter2006implementation}.  Not using $\delta_{c}$ removes issues where large modifications to the linear system may not provide a descent direction for the constraint violation.
\end{enumerate}
Other IPMs that use only inequalities include \cite{curtis2012penalty,vanderbei1999loqo}.

\subsection{Direction computation}\label{sub:direction-computation}

We now state how we compute directions, whose derivation is deferred to Section~\ref{sec:dir-derivation}. 

Let
\begin{flalign}
b &= \begin{bmatrix}
b_{D} \\
b_{P} \\
b_{C}
\end{bmatrix} = \begin{bmatrix}
 \grad_{x} \Lag_{\gamma \mu}(x,y) \\
(1-  \gamma)  \mu \conWeight \\
Y s - \gamma \mu \ones 
\end{bmatrix} \label{def:b}
\end{flalign}
be the target change in the KKT residual error, where $\grad_{x} \Lag_{\gamma \mu}(x,y)$ denotes $\grad_{x} \Lag_{\bar{\mu}}(x,y)$ with $\bar{\mu} = \gamma \mu$. The scalar $\gamma \in [0,1]$ represents the target reduction in constraint violation and barrier parameter $\mu$, with $\gamma = 1$ corresponding to stabilization steps and $\gamma < 1$ to aggressive steps. (For the simple one-phase algorithm, $\gamma = 0$ for the aggressive steps). The point $(\mu, x, s, y)$ denotes the current iterate. 

To compute the direction for the $x$ variables we solve
\begin{flalign}\label{eq:Schur-complement-system}
(\Schur + \delta I)  \dir{x} = -\left( b_{D} + \grad \cons(x)^T S^{-1} \left( Y b_{P} - b_{C} \right) \right),
\end{flalign}
where $\delta > 0$ is chosen such that $\Schur + \delta I$ is positive definite (the choice of $\delta$ is specified in Section~\ref{sec:simple-alg}) and
\begin{flalign}\label{eq:Schur-matrix}
\Schur = \grad_{xx}^2 \Lag_{\mu} (x, y) + \grad \cons(x)^T Y S^{-1} \grad \cons(x).
\end{flalign}
We factorize $\Schur + \delta I$ using Cholesky decomposition. The directions for the dual and slack variables are then
\begin{subequations}\label{compute-ds-dy}
\begin{flalign}
\dir{y} &\gets  -S^{-1} Y (\grad \cons(x)  \dir{x} + b_{P} - Y^{-1} b_{C}), \label{compute-dy} \\
\dir{s} &\gets -(1 - \gamma) \mu \conWeight - \grad \cons(x)  \dir{x}.  \label{compute-ds}
\end{flalign}
\end{subequations}
We remark that the direction $\dir{s}$ is not used for updating the iterates because $s$ is updated using a nonlinear update (Section~\ref{sec:update-iterates}), but we define it for completeness.

\subsubsection{Derivation of direction choice}\label{sec:dir-derivation} 
We now explain how we choose our directions \eqref{eq:Schur-complement-system} and \eqref{compute-ds-dy}. 

In our algorithm the direction $\dir{x}$ for the $x$ variables is computed with the goal of being approximately equal to $\dir{x}^{*}$ defined by
\begin{flalign}\label{sophisticated-barrier-problem}
\dir{x}^{*} \in \arg \min_{\bar{\vec{d}}_{x} \in \R^{\nvar}} & \barrier_{\gamma \mu}(x + \bar{\vec{d}}_{x}) + \frac{\delta}{2} \|\bar{\vec{d}}_{x} \|^2 %
\end{flalign}
with $\barrier_{\gamma \mu}$ denoting $\barrier_{\bar{\mu}}$ with $\bar{\mu}  = \gamma \mu$. This notation is used for subscripts throughout, i.e., $\Lag_{\gamma \mu}$ denotes $\Lag_{\bar{\mu}}$ with $\bar{\mu}  = \gamma \mu$.

Primal IPMs \cite{fiacco1990nonlinear} apply Newton's method directly to system \eqref{sophisticated-barrier-problem}. However, they have inferior practical performance to primal-dual methods that apply Newton's method directly to the optimality conditions. To derive the primal-dual directions let us write the first-order optimality conditions
\begin{flalign*}
\grad_{x} \Lag_{\gamma \mu}(x + \dir{x}^{*}, y + \dir{y}^{*}) + \delta \dir{x}^{*} &=  0  \\
\cons(x + \dir{x}^{*}) + s + \dir{s}^{*} &= \gamma \mu \conWeight \\
(S + \Dir{s}^{*}) (y + \dir{y}^{*}) &= \gamma \mu \ones \\
s + \dir{s}^{*}, y + \dir{y}^{*} &\ge 0,
\end{flalign*} 
where $(x,s,y)$ is the current values for the primal, slack and dual variables, $(x + \dir{x}^{*},y + \dir{y}^{*},s + \dir{s}^{*})$ is the optimal solution to \eqref{sophisticated-barrier-problem}, and $(\dir{x}^{*},\dir{y}^{*},\dir{s}^{*})$ are the corresponding directions ($\Dir{s}^{*}$ is a diagonal matrix with entries $\dir{s}^{*}$). Thus,
\begin{flalign}\label{primal-dual-Newton-direction}
\mathcal{K}_{\delta} \dir{}= -b,
\end{flalign}
where
\begin{flalign}
\mathcal{K}_{\delta} = \begin{bmatrix}
 \grad_{xx}^2 \Lag_{\mu}(x, y) + \delta I  & \grad \cons(x)^T & 0  \\
\grad \cons(x) & 0 & I \\
0 & S & Y
\end{bmatrix} \text{ and } d = \begin{bmatrix}
\dir{x} \\
\dir{y} \\
\dir{s}
\end{bmatrix}. \label{def:K-delta} 
\end{flalign}
Eliminating $\dir{s}$ from \eqref{primal-dual-Newton-direction} yields the symmetric system
\begin{flalign}\label{eq:ldl-system}
 \begin{bmatrix}
 \grad_{xx}^2 \Lag_{\mu}(x,y) + \delta I  & \grad \cons(x)^T  \\
\grad \cons(x) & -Y^{-1} S \\
\end{bmatrix}
\begin{bmatrix}
\dir{x} \\
\dir{y}
\end{bmatrix} 
=
-\begin{bmatrix}
b_{D} \\
b_{P} - Y^{-1} b_{C}
\end{bmatrix}.
\end{flalign}

\noindent This is similar to the system typically factorized by nonlinear programming solvers using \LBL{} \cite{andersen1998computational,byrd2006knitro,vanderbei1999loqo,wachter2006implementation}. If the matrix $\grad_{xx}^2 \Lag_{\mu}(x,y) + \delta I$ is positive definite the whole matrix is quasidefinite, and in this case one can perform an \LDL{} factorization with a fixed pivot order \cite{gill1996stability,vanderbei1995symmetric}. However, if $\grad_{xx}^2 \Lag_{\mu}(x,y) + \delta I$ is not positive definite then \LBL{} \cite{bunch1971direct} must be used to guarantee factorability and may require excessive numerical pivoting. One way of avoiding using an \LBL{} factorization is to take the Schur complement of \eqref{eq:ldl-system}. For this system, there are two possible Schur complements. We use the term primal Schur complement to mean that the final system is in terms of the primal variables, whereas the dual Schur complement gives a system in the dual variables. 

Taking the primal Schur complement gives system \eqref{eq:Schur-complement-system}:
\begin{flalign*}
(\Schur + \delta I)  \dir{x} = -\left( b_{D} + \grad \cons(x)^T S^{-1} \left( Y b_{P} - b_{C} \right) \right).
\end{flalign*}
Equations \eqref{compute-ds-dy} also follow from \eqref{primal-dual-Newton-direction}.

Note that if $\Schur + \delta I$ is positive definite and $\gamma = 1$, then the right-hand side of \eqref{eq:Schur-complement-system} becomes $-\grad \barrier_{\mu}(x)$; therefore $\dir{x}$ is a descent direction for the function $\barrier_{\mu}(x)$. Consequently, we pick $\delta > 0$ such that $\Schur + \delta I$ is positive definite. Furthermore, note that if $Y = S^{-1} \mu$ then \eqref{eq:Schur-complement-system} reduces to
$$
(\hess \psi_{\mu}(x) + \delta I) \dir{x}  = - \grad \barrier_{\mu}(x);
$$
hence $\Schur$ should be interpreted as a primal-dual approximation of the Hessian of $\barrier_{\mu}$.

We emphasize we are forming the primal Schur complement, not the dual. This is a critical distinction for nonlinear programming because there are drawbacks to using the dual Schur complement. First, the matrix $\Schur$ could be positive definite but $\grad_{xx}^2 \Lag_{\mu}(x,y)$ could be negative definite, indefinite or even singular. Consequently, one might need to add an unnecessarily large $\delta$ to make $\grad_{xx}^2 \Lag_{\mu}(x,y) + \delta I$ positive definite and compute the direction. This could slow progress and prohibit superlinear convergence. Second, this method requires computing $(\grad_{xx}^2 \Lag_{\mu}(x,y) + \delta I)^{-1}  \grad \cons(x)^T$, which is expensive if there is even a moderate number of constraints. Finally, if $\grad_{xx}^2 \Lag_{\mu}(x,y)$ is not a diagonal matrix, then, usually the dual schur complement will usually be dense (similar issues occur for reduced Hessian methods  \cite{walterThesis1,walterThesis2}). In contrast, $\Schur$ is generally sparse. Furthermore, if $\Schur$ is not sparse it is likely that the Jacobian $\grad \cons(x)$ has a dense row. This, however, could be eliminated through row stretching of the original problem, as is done for columns in linear programming \cite{grcar2012matrix,lustig1991formulating,vanderbei1991splitting}. 

\subsection{Updating the iterates}\label{sec:update-iterates}

Suppose that we have computed direction $(\dir{x}, \dir{s}, \dir{y})$ with some $\gamma \in [0,1]$ using \eqref{eq:Schur-complement-system} and \eqref{compute-ds-dy}. We wish to construct a candidate $(\mu^{+}, x^{+}, s^{+}, y^{+})$ for the next iterate.
Given a primal step size $\alpha_{P} \in [0,1]$ and dual step size $\alpha_{D} \in [0,1]$ we update the iterates as follows:
\begin{subequations}\label{eq:iterate-update}
\begin{flalign}
\mu^{+} &\gets (1 - (1 - \gamma) \alpha_{P}) \mu \label{eq:muVarUpdate} \\
\vec{x}^{+} &\gets \vec{x} + \alpha_{P} \dir{x} \label{eq:xVarUpdate} \\
\vec{s}^{+} &\gets \mu^{+} \conWeight - \cons(\vec{x}^{+}) \label{eq:slackVarUpdate} \\ 
\vec{y}^{+} &\gets \vec{y} + \alpha_{D} \dir{y}. \label{eq:update-y} 
\end{flalign}
\end{subequations}
The slack variable update does not use $\dir{s}$. Instead, \eqref{eq:slackVarUpdate} is nonlinear and its purpose is to ensure that \eqref{eq:primal-feasibility} remains satisfied, so that we can control the rate of reduction of primal feasibility. In infeasible-start algorithms for linear programming \cite{lustig1990feasibility,mehrotra1992implementation} the variables updates are all linear, i.e., $\vec{s}^{+} \gets \vec{s} + \alpha_{P} \dir{s}$. However, if the function $a$ is linear, the slack variable update~\eqref{eq:slackVarUpdate} reduces to
$$
s^{+} = \mu^{+} \conWeight - \cons(x) - \alpha_{P} \grad \cons(x)  \dir{x} = \left(\mu \conWeight - \cons(x) \right) -  \alpha_{P}  \left( (1 - \gamma) \mu \conWeight + \grad \cons(x)  \dir{x} \right) = s + \alpha_{P} \dir{s}
$$
where the first equality uses \eqref{eq:slackVarUpdate} and linearity of $a$, the second uses \eqref{eq:muVarUpdate}, and the final uses \eqref{compute-ds}. Furthermore, as $\dir{x} \rightarrow 0$ the linear approximation $\cons(x) + \alpha_{P} \grad \cons(x)$ of $\cons(x + \alpha_{P} \dir{x})$ becomes very accurate and we have $s^{+} \rightarrow  s + \alpha_{P} \dir{s}$. Nonlinear updates for the slack variables have been used in other interior point methods \cite{andersen1998computational, curtis2012penalty}.

Finally, we only select steps that maintain \eqref{eq-domain} and \eqref{eq:comp-slack}, i.e., satisfy
\begin{subequations}\label{eq:comp-slack-plus}
\begin{flalign}
s^{+}, y^{+}, \mu^{+} &> 0 \\
\frac{s^{+}_i y^{+}_i}{\mu^{+}} &\in [ \parComp, 1 / \parComp] ~~ \forall i \in \{ 1, \dots, m \}. 
\end{flalign}
\end{subequations}

\subsection{Termination criterion}

Define the function
$$
\sigma (y) := \frac{100}{\max\{ 100, \| y \|_{\infty} \}}
$$
as a scaling factor based on the size of the dual variables. This scaling factor is related to $s_{d}$ and $s_{c}$ in the IPOPT implementation paper \cite{wachter2006implementation}. We use $\sigma(y)$ in the local optimality termination criterion \eqref{terminate-kkt} because there may be numerical issues reducing the unscaled dual feasibility if the dual multipliers become large. In particular, the first-order optimality termination criterion we use is
\begin{subequations}\label{terminate-kkt}
\begin{flalign}
\sigma (y) \| \grad \Lag_0(x, y) \|_{\infty} &\le  \TOLopt  \\
\sigma (y) \| S y \|_{\infty} &\le \TOLopt  \\
\| \cons(x) + s \|_{\infty} &\le \TOLopt,
\end{flalign}
where $\TOLopt \in (0,1)$ with a default value of $\TOLoptValue$.
\end{subequations}
The first-order local primal infeasibility termination criterion is given by
\begin{subequations}\label{terminate-primal-infeasible}
\begin{flalign}
a(x)^T y &> 0 \\
\infeasFuncOne (\mu,x,s,y) &\le \TOLinfOne \\
\infeasFuncTwo (\mu,x,s,y) &\le \TOLinfTwo, 
\end{flalign}
\end{subequations}
where
\begin{flalign*}
\infeasFuncOne (x,y) &:= \frac{\| \grad  a(x)^T y \|_{1}}{ a(x)^T y } \\
\infeasFuncTwo (x,s,y) &:= \frac{\| \grad  a(x)^T y \|_{1} + s^T y}{ \| y \|_{1} },
\end{flalign*}
and $\TOLinfOne, \TOLinfTwo \in (0,1)$ with default values of $\TOLinfOneValue$ and $\TOLinfTwoValue$ respectively.
We remark that if we find a point with $\infeasFuncTwo (x,s,y) = 0$ then we have found a stationary point to a weighted $L_{\infty}$ infeasibility measure.
For a more thorough justification of this choice for the infeasibility termination criterion, see Section~\ref{sec:infeas-criteron-justify}.

The unboundedness termination criterion is given by
\begin{flalign}\label{terminate-dual-infeasible}
\frac{1}{\| x \|_{\infty}}\ge 1/\TOLunbounded,
\end{flalign}
where $\TOLunbounded \in (0,1)$ with default value $\TOLunboundedValue$. Note that since we require all the iterates to maintain $a(x) \le \mu \conWeight \le \mu^{0} \conWeight$ satisfying, the unboundedness termination criterion strongly indicates that the set $\{ x \in \R^{n} : a(x) \le  \conWeight \mu  \}$ is unbounded.

\subsection{The algorithm}\label{sec:simple-alg-details}

Before we outline our algorithm, we need to define the switching condition for choosing an aggressive step instead of a stabilization step. The condition is

\begin{subequations}\label{agg-criteron}
\begin{flalign}
\sigma (y) \| \grad \Lag_{\mu}(x, y) \|_{\infty} &\le \mu   \label{agg-criteron-opt} \\
\| \grad \Lag_{\mu}(x, y) \|_{1} &\le \| \grad f(x) - \parConRegularizer \mu e^T \grad a(x) \|_{1} +  s^T y \label{agg-criteron-farkas} \\
 \frac{s_i y_i}{\mu} &\in  [ \parCompAgg, 1 / \parCompAgg ] ~~ \forall i \in \{ 1, \dots, m \}, \label{agg-criteron-buffer} 
\end{flalign}
\end{subequations}
where the parameter $\parCompAgg \in (\parComp, 1)$ has a default value of $\parCompAggValue$. The purpose of \eqref{agg-criteron-opt} is to ensure that we have approximately solved the shifted log barrier problem and guarantees that this subsequence of iterates satisfies \eqref{eq:dual-feas}. Equation~\eqref{agg-criteron-farkas} helps ensure (as we show in Section~\ref{sec:global-conv}) that if the dual variables are diverging rapidly then the infeasibility termination criterion is met. Finally, equation~\eqref{agg-criteron-buffer} with $\parCompAgg > \parComp$ ensures we have a buffer such that we can still satisfy \eqref{eq:comp-slack} when we take an aggressive step. 

Algorithm~\ref{simple-one-phase} formally outlines our one-phase interior point method. It does not include the details for the aggressive or stabilization steps that are given in Algorithm~\ref{alg:simple-agg-step} and \ref{alg:simple-stb-step} respectively. Since Algorithm~\ref{simple-one-phase} maintains $\cons(x) + s = \conWeight \mu$ for each iterate, it requires the starting point to satisfy
$$
\cons(x^{0}) + s^{0} = \conWeight \mu^{0},
$$
with $\conWeight \ge 0$ and $\mu^{0} > 0$. For any fixed $x^{0}$ one can always pick sufficiently large $\conWeight$ and $\mu^{0}$ such that $\mu^{0} \conWeight > \cons(x^{0})$, and setting $s^{0} \gets  \mu^{0}  \conWeight- \cons(x^{0})$ meets our requirements.

\newcommand{\superlinear}[1]{}

\begin{algorithm}[H]
\textbf{Input:} a initial point $x^{0}$, vector $\conWeight \ge 0$, and variables $\mu^0, s^{0}, y^{0} > 0$  such that $\cons(x^{0}) + s^{0} = \conWeight \mu^{0}$ and equation~\eqref{eq:comp-slack} with $k=0$ is satisfied. Termination tolerances $\TOLopt \in \TOLoptInterval$, $\TOLinfOne \in \TOLinfOneInterval$, $\TOLinfTwo \in \TOLinfTwoInterval$ and $\TOLunbounded \in \TOLunboundedInterval$. \\
\textbf{Output:} a point $(\mu^k, x^k, s^k, y^k)$ that satisfies at least one of the inequalities \termination{} \\

For $k \in \{1, \dots, \infty\}$
\begin{enumerate}[label*=A.{\arabic*}]
\superlinear{ \item \emph{Increase time since last significant improvement.} Set $t \gets t + 1$.}
\item \label{simple-first-step} Set $(\mu,x,s,y) \gets (\mu^{k-1},x^{k-1},s^{k-1},y^{k-1})$.
\item \emph{Check termination criterion}. If \termination{} is satisfied then terminate the algorithm.
\item \label{alg-simple-delta-min} Form the matrix $\Schur$ using \eqref{eq:Schur-matrix}. Set $\delta_{\min} \gets \max\{0, \mu-2 \lambda_{\min}(\mathcal{M}) \}$.
\item If \eqref{agg-criteron}\superlinear{ or $t = 2$}  is satisfied go to line~\ref{simple-agg-step}, \superlinear{$t = 1$ go to stable superlinear}  otherwise go to line~\ref{simple-stb-step}.
\item\label{simple-agg-step} \emph{Take an aggressive step}. 
\begin{enumerate}[label*=.{\arabic*}]
\item If $\delta_{\min} > 0$ go to line~\ref{simple-agg-large-delta}.
\item Set $\delta \gets 0$. Run Algorithm~\ref{alg:simple-agg-step}. If it terminates with $\status = \success$, then set $(\mu^k,x^k,s^k,y^k) \gets (\mu^{+},x^{+},s^{+},y^{+})$ and go to line~\ref{simple-first-step}.
 \superlinear{\item If $\norm{ \grad \Lag(x^{+},y^{+}) } \le \mu /  2$ and $\mu^{+} \le \mu/2$, then set $t \gets 0$.}
\item \label{simple-agg-large-delta} Run Algorithm~\ref{alg:simple-agg-step} with sufficiently large $\delta$ such that the algorithm terminates with $\status = \success$.  Set $(\mu^k,x^k,s^k,y^k) \gets (\mu^{+},x^{+},s^{+},y^{+})$ and go to line~\ref{simple-first-step}.
\end{enumerate}
\item \label{simple-stb-step} \emph{Take a stabilization step}.  Set $\delta \gets \delta_{\min}$. Run Algorithm~\ref{alg:simple-stb-step}, set $(\mu^k,x^k,s^k,y^k) \gets (\mu^{+},x^{+},s^{+},y^{+})$. Go to line~\ref{simple-first-step}.
\end{enumerate}
\caption{A simplified one-phase algorithm}\label{simple-one-phase}
\end{algorithm}

\subsubsection{Aggressive steps}\label{sec:simple-agg-step}

The goal of the aggressive steps is to approach optimality and feasibility simultaneously. For this reason, we set $\gamma = 0$ for the aggressive direction computation. We require a minimum step size $\alpha_{P}$ of
 \begin{flalign}\label{simple-min-step-size-aggresssive}
\minStepFunc(\mu, s) := \min\left\{ 1/2, \frac{(\parCompAgg - \parComp)}{2  \parCompAgg \mu }  \min_{i \in \{ 1, ..., \ncon \} : \conWeight_i > 0}{\frac{s_i}{w_i}} \right\}.
\end{flalign}
If $\alpha_{P} < \minStepFunc(\mu, s)$, we declare the step a failure (see line~\ref{line:too-small-step} of Algorithm~\ref{alg:simple-agg-step}). However, on line~\ref{simple-agg-large-delta} of Algorithm~\ref{simple-one-phase}, we choose a sufficiently large $\delta$ such that $\alpha_{P}$ selected by Algorithm~\ref{alg:simple-agg-step} satisfies $\alpha_{P} \ge \minStepFunc(\mu, s)$; such a $\delta$ must exist by Lemma~\ref{lemma:agg-succeeds}. Furthermore, the primal aggressive step size $\alpha_{P}$ can be at most $ \min\{1/2, \mu \}$. This ensures line~\ref{simple-agg-select-alpha-P} is well-defined, i.e., the set of possible primal step sizes is closed.

\begin{algorithm}[H]
\textbf{Input:}  $\delta > 0$, the current point $(\mu, x, s, y)$ and the matrix $\Schur$.  \\
\textbf{Output:} A new point $(\mu^{+}, x^{+}, s^{+}, y^{+})$ and a $\status$.
\begin{enumerate}[label*=A.{\arabic*}]
\item Compute vector $b$ at point $(\mu, x, s, y)$ via \eqref{def:b} with $\gamma = 0$.
\item Compute direction $(\dir{x}, \dir{s}, \dir{y})$ via  \eqref{eq:Schur-complement-system} and \eqref{compute-ds-dy}.
\item \label{simple-agg-select-alpha-P} Select the largest $\alpha_{P} \in \left[ 0, \min\{1/2, \mu \} \right]$ such that the iterates $(\mu^{+}, x^{+}, s^{+}, y^{+})$ computed by \eqref{eq:iterate-update} satisfy \eqref{eq:comp-slack-plus} for some $\alpha_{D} \in [0,1]$. 
\item With the previous $\alpha_{P}$ fixed, select the largest $\alpha_{D}$ such that the iterates $(\mu^{+}, x^{+}, s^{+}, y^{+})$ computed by \eqref{eq:iterate-update} satisfy \eqref{eq:comp-slack-plus}.
\item \label{line:too-small-step} If $\alpha_{P} \ge \minStepFunc(\mu, s)$  then $\status =\success$; otherwise $\status =\failure$.
\end{enumerate}
\caption{Simplified aggressive step}\label{alg:simple-agg-step}
\end{algorithm}

\subsubsection{Stabilization steps}\label{sec:simple-stable}
The goal of stabilization steps (Algorithm~\ref{alg:simple-stb-step}) is to reduce the log barrier function $\barrier_{\mu}$. For this purpose we set $\gamma = 1$ during the direction computation. However, the log barrier function $\barrier_{\mu}$ does not measure anything with respect to the dual iterates. This might impede performance if $\| S y - \mu \ones \|_{\infty}$ is large but $\| \grad \barrier_{\mu}(x) \|$ is small. In this case, taking a large step might reduce the complementarity significantly, even though the barrier function increases slightly. Therefore we add a complementarity measure to the barrier function to create an augmented log barrier function:
\begin{flalign*}
\phi_{\mu}(x, s, y) := \barrier_{\mu}(x) + \frac{\| S y - \mu \ones \|_{\infty}^3}{\mu^2}.
\end{flalign*}
We say that the candidate iterates $(x^{+}, s^{+}, y^{+})$ have made sufficient progress on $\phi_{\mu}$ over the current iterate $(x, s, y)$ if
\begin{flalign}\label{eq:phi-sufficient-progress}
\phi_{\mu}(x^{+}, s^{+}, y^{+}) \le \phi_{\mu}(x, s, y) +  \alpha_{P} \parObjReductFactor \left( \frac{1}{2} \left( \grad \psi_{\mu}(x)^T  \dir{x} - \frac{\delta}{2} \alpha_{P} \norm{ \dir{x}}^2 \right) -  \frac{\| S y - \mu \ones \|_{\infty}^3}{\mu^2}  \right),
\end{flalign}
where $\parObjReductFactor \in (0,1)$ is a parameter with default value $\parObjReductFactorValue$.

\begin{algorithm}[H]
\textbf{Input:} Some $\delta > 0$, the current point $(\mu, x, s, y)$ and the matrix $\Schur$.  \\
\textbf{Output:} A new point $(\mu^{+}, x^{+}, s^{+}, y^{+})$.
\begin{enumerate}[label*=A.{\arabic*}]
\item Compute the vector $b$ at the point $(\mu, x, s, y)$ via \eqref{def:b} with $\gamma = 1$.
\item Compute direction $(\dir{x}, \dir{s}, \dir{y})$ via  \eqref{eq:Schur-complement-system} and \eqref{compute-ds-dy}.
\item Pick the largest $\alpha_{P} = \alpha_{D} \in [0,1]$ such that $(\mu^{+},x^{+}, s^{+}, y^{+})$ computed by \eqref{eq:iterate-update} satisfies \eqref{eq:phi-sufficient-progress} and \eqref{eq:comp-slack-plus}.
\end{enumerate}
\caption{Simplified stabilization step}\label{alg:simple-stb-step}
\end{algorithm}

\section{Theoretical justification}\label{sec:theory}

The goal of this section is to provide some simple theoretical justification for our algorithm. Section~\ref{sec:infeas-criteron-justify} justifies the infeasibility termination criterion. Section~\ref{sec:global-conv} proves that the Algorithm~\ref{simple-one-phase} eventually terminates. 

\subsection{Derivation of primal infeasibility termination criterion} \label{sec:infeas-criteron-justify}

Here we justify our choice of local infeasibility termination criterion by showing that it corresponds to a stationary measure for the infeasibility with respect to a weighted $L_{\infty}$ norm. We also prove that when the problem is convex our criterion certifies global infeasibility.

Consider the optimization problem
\begin{subequations}\label{infeasible-problem}
\begin{flalign}
\min_{x} \max_{i : \conWeight_i > 0}{ \frac{a_i(x)}{ w_i }  }  \\
\text{s.t. } a_i(x) \le 0, \forall i \text{ s.t. } \conWeight_i = 0,
\end{flalign}
\end{subequations}
for some non-negative vector $\conWeight$. For example, a natural choice of $\conWeight$ is $\conWeight_i = 0$ for variable bounds and $\conWeight_i = 1$ for all other constraints. This results in minimizing the $L_{\infty}$ norm of the constraint violation subject to variable bounds. Note that \eqref{infeasible-problem} is equivalent to the optimization problem
\begin{subequations}\label{feas-problem}
\begin{flalign}
\min_{x,s,\mu} \mu \\
\text{s.t. } \cons(x) + s = \mu \conWeight \\
s, \mu \ge 0.
\end{flalign}
\end{subequations}

The KKT conditions for \eqref{feas-problem} are
\begin{flalign*}
\cons(x) + s &= \mu \conWeight  \\
\grad \cons(x)^T \tilde{y}  &= 0 \\
\conWeight^T \tilde{y}  + \tau &= 1 \\
\tau \mu &= 0  \\
\tilde{y}^T s &= 0 \\
s, \mu, \tilde{y}, \tau &\ge 0.
\end{flalign*}
Note that if the sequence $(\mu^k,x^k,s^k,y^k)$ generated by our algorithm satisfies
\begin{flalign*}
\cons(x^k) + s^k = \mu^k \conWeight  \\
\infeasFuncTwo (x^k,s^k,y^k) \rightarrow 0
\end{flalign*}
then with $\tilde{y}^k = \frac{y^k}{\conWeight^T y^k}$, $\tau^k = 0$, the KKT residual for problem~\eqref{infeasible-problem} of the sequence $(x^k, s^k, \tilde{y}^k, \tau^k)$ tends to zero. However, this is a poor measure of infeasibility because $\infeasFuncTwo (x^k,s^k,y^k) \rightarrow 0$ may also be satisfied if the algorithm is converging to a feasible solution (i.e., if the norm of the dual multipliers tends to infinity). For this reason, our infeasibility criterion~\eqref{terminate-primal-infeasible} includes $\infeasFuncOne$ to help avoid declaring a problem infeasible when the algorithm is in fact converging towards a feasible solution. To make this more precise, we include the following trivial observation.

\begin{observation}
Assume the constraint function $a : \R^{\nvar} \rightarrow \R^{\ncon}$ is differentiable and convex, and that some minimizer $(\mu^{*}, x^{*})$ of \eqref{feas-problem} satisfies $\| x - x^{*} \|_{\infty} \le R$ for some $R \in (0,\infty)$. Suppose also that at some point $(x,s,y)$ with $s,y \ge 0$, $\infeasFuncOne(x,s,y) \le 1 / (2R)$
and $y^T a(x) > 0$. Then the system $\cons(x) \le 0$ has no feasible solution.
\end{observation}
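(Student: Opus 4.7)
The plan is to argue by contradiction: assume the constraint system $a(x) \le 0$ does have a feasible point $\hat{x}$, and derive that $\infeasFuncOne(x,y)$ must exceed $1/R$, contradicting the hypothesis that $\infeasFuncOne(x,y) \le 1/(2R)$. The key leverage is convexity of $a$ combined with $y \ge 0$, which lets us turn a linearization inequality in $\R^{\ncon}$ into a usable scalar bound after taking a $y$-weighted inner product.

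First I would observe that if $a(x) \le 0$ has any feasible point $\hat{x}$, then the pair $(0, \hat{x})$ (with slack $s = -a(\hat{x}) \ge 0$) is feasible for problem \eqref{feas-problem} with objective value $0$, so every minimizer $(\mu^{*}, x^{*})$ satisfies $\mu^{*} = 0$ and hence $a(x^{*}) \le 0$. This is where the hypothesis on $x^{*}$ becomes useful: the minimizer postulated in the observation is automatically a genuine feasible point at distance at most $R$ from $x$.

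Next I would apply convexity componentwise: for each $i$, $a_i(x^{*}) \ge a_i(x) + \grad a_i(x)^T (x^{*} - x)$. Multiplying by $y_i \ge 0$ and summing gives
\[
y^T a(x^{*}) \;\ge\; y^T a(x) + y^T \grad a(x) (x^{*} - x).
\]
Since $a(x^{*}) \le 0$ and $y \ge 0$, the left-hand side is $\le 0$, so $y^T \grad a(x)(x^{*} - x) \le -y^T a(x)$. Hölder's inequality then yields
\[
-\| \grad a(x)^T y \|_{1} \, \| x^{*} - x \|_{\infty} \;\le\; y^T \grad a(x)(x^{*} - x) \;\le\; -y^T a(x),
\]
and invoking $\| x^{*} - x \|_{\infty} \le R$ together with $y^T a(x) > 0$ rearranges to $\| \grad a(x)^T y \|_{1} \ge y^T a(x) / R$, i.e., $\infeasFuncOne(x,y) \ge 1/R$. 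This contradicts $\infeasFuncOne(x,y) \le 1/(2R)$, completing the proof.

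There is no real obstacle here; the only subtle point is to remember that $y^T a(x) > 0$ is needed both to conclude the final inequality is strict enough to contradict $1/(2R)$ and to ensure $\infeasFuncOne$ is well-defined with the stated sign. Convexity is used only through the componentwise gradient inequality, and the $y \ge 0$ assumption is what allows it to survive the $y$-weighted sum.
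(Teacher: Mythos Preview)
Your argument is correct and uses the same ingredients as the paper's proof: the componentwise convexity inequality weighted by $y\ge 0$, followed by H\"older with the $\ell_1/\ell_\infty$ pairing and the bound $\|x-x^*\|_\infty\le R$. The paper organises the same chain as a direct estimate
\[
y^T a(x^*) \;\ge\; y^T a(x)\bigl(1 - R\,\infeasFuncOne(x,y)\bigr) \;\ge\; \tfrac12\, y^T a(x) \;>\; 0,
\]
concluding that some $a_i(x^*)>0$, whereas you run the contrapositive, first deducing $a(x^*)\le 0$ from feasibility and then obtaining $\infeasFuncOne(x,y)\ge 1/R$. Your version has the minor expository advantage that it makes explicit why infeasibility of $x^*$ implies global infeasibility (via $\mu^*=0$ for the minimizer of \eqref{feas-problem}), a step the paper leaves implicit; otherwise the two arguments are the same.
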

\begin{proof}
We have
\begin{flalign*}
y^T \cons(x^{*}) &\ge y^T \left( \cons(x) + \grad a (x) (x^{*} - x) \right) \ge y^T \cons(x) (1 - R \times \infeasFuncOne(x,s,y) )   \ge y^T \cons(x) / 2 > 0,
\end{flalign*}
where the first inequality holds via convexity and $y \ge 0$, the second by definition of $\infeasFuncOne(x,s,y) := \frac{\| \grad  a(x)^T y \|_{1}}{ a(x)^T y }$, the third and fourth by assumption. Since $y \ge 0$, we deduce that $a_i(x^{*}) > 0$ for some $i$.
\end{proof}

Finally, we remark that if wish to find a stationary point with respect to a different measure of constraint violation $v(z)$ we can apply our solver to the problem
\begin{flalign*}
\min f(z) \\
a(x) \le z \\
z \ge 0 \\
v(z) \le 0,
\end{flalign*}
starting with $\conWeight_{k} = 0$ for $k = 1, \dots \ncon + \nvar$, $z^{0} = \max\{ a(x^0), e \}$, $\mu^0 > 0$ and $\conWeight_{\ncon + \nvar+1} = \frac{1 + v(z^0)}{\mu^0} > 0$. With this choice of $\conWeight$ one can see from \eqref{infeasible-problem} that if we do not find a feasible solution then we automatically find a solution to:
\begin{flalign*}
\min v(z)\\
a(x) \le z \\
z \ge 0.
\end{flalign*}
For example, if $v(z) = e^T z$ then we minimize the L1-norm of the constraint violation. 

\subsection{Global convergence proofs for Algorithm~\ref{simple-one-phase}}\label{sec:global-conv}

We now give a global convergence proof for Algorithm~\ref{simple-one-phase} as stated in Theorem~\ref{thm:global-convergence} in Section~\ref{subsec:main-result}. Since the proofs are mostly mechanical, we defer most of the proofs to Appendix~\ref{app:global-conv}. Our results hold under assumption~\ref{assume:diff} and \ref{assume:parameters}. 
\begin{assumption}\label{assume:diff}
Assume the functions $f : \R^{\nvar} \rightarrow \R$ and $a : \R^{\nvar} \rightarrow \R^{\ncon}$ are twice differentiable on $\R^{\nvar}$.
\end{assumption}

\begin{assumption}\label{assume:parameters}
The algorithm parameters satisfy $\parConRegularizer \in \parConRegularizerInterval$, $\parComp \in \parCompInterval$ and $\parCompAgg \in \parCompAggInterval$. The tolerances satisfy $\TOLopt \in \TOLoptInterval$, $\TOLinfOne \in \TOLinfTwoInterval$, $\TOLinfTwo \in \TOLinfTwoInterval$ and $\TOLunbounded \in \TOLunboundedInterval$. The vector $\conWeight \ge 0$.
\end{assumption}

Recall that the iterates of our algorithm satisfy \eqref{eq:barrier-primal-sequence-nice}, i.e.,
\begin{subequations}\label{restate:eq:barrier-primal-sequence-nice}
\begin{flalign}
(x,s, y, \mu) &\in \R^{\nvar} \times \R_{++}^{\ncon} \times \R_{++}^{\ncon} \times  \R_{++} \label{restate:eq-domain} \\
\frac{s_i y_i}{\mu} &\in [ \parComp, 1 / \parComp] ~~ \forall i \in \{ 1, \dots, m \} \label{restate:eq:comp-slack} \\
\cons(x) + s &= \mu \conWeight.  \label{restate:eq:primal-feasibility} 
\end{flalign} 
\end{subequations}

\subsubsection{Convergence of aggressive steps}
Here we show that after a finite number of aggressive steps, Algorithm~\ref{simple-one-phase} converges. Lemma~\ref{lemma:agg-succeeds} uses the fact that for large enough $\delta$ the slack variables can be absorbed to reduce the barrier parameter $\mu$. 

First, we show that for sufficiently large $\delta$ Algorithm~\ref{alg:simple-agg-step} succeeds. Therefore line~\ref{simple-agg-large-delta} of Algorithm~\ref{simple-one-phase} is well-defined. 

\begin{restatable}{lemma}{lemAggSucceeds}\label{lemma:agg-succeeds}
Suppose assumptions~\ref{assume:diff} and \ref{assume:parameters} hold. If $(\mu, x, s, y)$ satisfies \eqref{restate:eq:barrier-primal-sequence-nice} and the criterion for an aggressive step \eqref{agg-criteron}, then there exists some $\bar{\delta}$ such that for all $\delta > \bar{\delta}$ Algorithm~\ref{alg:simple-agg-step} returns $\status = \success$.
\end{restatable}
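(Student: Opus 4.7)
The overall strategy is a continuity argument. As $\delta \to \infty$ the primal direction $\dir{x}$ vanishes, so in the limit the aggressive step reduces to a pure update of $\mu$, $s$, and $y$. The threshold step $\theta(\mu, s)$ in \eqref{simple-min-step-size-aggresssive} is tailored so that in this limit the candidate iterate at $\alpha_P = \theta(\mu, s)$ satisfies the positivity and complementarity requirements \eqref{eq:comp-slack-plus} with strict slack. A continuity argument then lifts this to all sufficiently large but finite $\delta$.

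First I would show $\|\dir{x}\| \to 0$ as $\delta \to \infty$. Since \eqref{eq:Schur-complement-system} reads $(\Schur + \delta I) \dir{x} = -r$ for a vector $r$ independent of $\delta$, once $\delta > -\lambda_{\min}(\Schur)$ the matrix is positive definite and $\|\dir{x}\| \le \|r\| / (\delta + \lambda_{\min}(\Schur)) \to 0$. Combining this with \eqref{compute-dy} specialised to the aggressive step ($\gamma = 0$, so $b_P = \mu \conWeight$ and $b_C = Y s$), the limiting dual direction is $\dir{y}_i \to y_i (s_i - \mu w_i)/s_i$. Hence, under \eqref{eq:iterate-update}, the limiting candidate satisfies $s_i^+ = s_i - \alpha_P \mu w_i$, $\mu^+ = (1 - \alpha_P)\mu$, and $y_i^+ = y_i\bigl(1 + \alpha_D (s_i - \mu w_i)/s_i\bigr)$.

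Next, fix $\alpha_P = \theta(\mu, s)$. The defining bound on $\theta$ gives $\alpha_P \mu w_i / s_i \le (\parCompAgg - \parComp)/(2 \parCompAgg) < 1$ for every $i$ with $w_i > 0$, yielding $s_i^+ > 0$; and $\alpha_P \le 1/2$ gives $\mu^+ > 0$. For the complementarity bracket, write $c_i := s_i y_i / \mu$ and $r_i := \mu w_i / s_i$. The aggressive buffer \eqref{agg-criteron-buffer} gives $c_i \in [\parCompAgg, 1/\parCompAgg]$, and in the limit
\[ \frac{s_i^+ y_i^+}{\mu^+} \;=\; c_i \cdot \frac{(1 - \alpha_P r_i)\bigl(1 + \alpha_D(1 - r_i)\bigr)}{1 - \alpha_P}. \]
Choosing $\alpha_D$ appropriately (the correct choice depends on the sign of $1 - r_i$ and on which side of $[\parComp, 1/\parComp]$ is binding), one shows that the perturbation factor stays close enough to $1$ that the right-hand side lies in $(\parComp, 1/\parComp)$; the explicit bound $\alpha_P r_i \le (\parCompAgg - \parComp)/(2\parCompAgg)$ supplied by $\theta$ is exactly the margin needed to absorb the gap between the buffered interval $[\parCompAgg, 1/\parCompAgg]$ and the target interval $[\parComp, 1/\parComp]$. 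Once every inequality in \eqref{eq:comp-slack-plus} holds strictly in the limit, continuity of $\dir{x}$ and $\dir{y}$ in $\delta$ yields a threshold $\bar{\delta}$ above which the same inequalities still hold; the largest feasible primal step is then at least $\theta(\mu, s)$, so Algorithm~\ref{alg:simple-agg-step} returns $\status = \success$.

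The main obstacle is the complementarity casework: $\alpha_D$ is a single scalar that must simultaneously keep all coordinates of the ratio inside $[\parComp, 1/\parComp]$, and the perturbation moves this ratio in opposite directions for $r_i < 1$ and $r_i > 1$. The argument has to exploit that $\parCompAgg > \parComp$ strictly and that $\theta$ gives a quantitative small upper bound on $\alpha_P r_i$; the rest of the proof is routine continuity and linear algebra.
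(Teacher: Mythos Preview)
Your continuity strategy matches the paper's: show $\dir{x}\to 0$ as $\delta\to\infty$, verify \eqref{eq:comp-slack-plus} at $\alpha_P=\theta(\mu,s)$ in the limit, then lift to large finite $\delta$. The divergence is in how the dual step is handled. You compute the limiting $\dir{y}$ and then search for an $\alpha_D$ by casework on the sign of $1-r_i$, correctly flagging this as the main obstacle since a single scalar $\alpha_D$ must serve every coordinate simultaneously. The paper bypasses this entirely by taking $\alpha_D=0$. With $y^+=y$, the direction $\dir{y}$ never enters; one only needs to control $s_i^+ y_i/\mu$, and the paper obtains
\[
|s_i^+-s_i|\le \frac{\parCompAgg-\parComp}{\parCompAgg}\,s_i
\]
by combining the $\theta$-controlled shift $\alpha_P\mu w_i\le \tfrac{\parCompAgg-\parComp}{2\parCompAgg}s_i$ with the vanishing nonlinear correction $|a_i(x)-a_i(x+\alpha_P\dir{x})|$, which can be made at most $\tfrac{\parCompAgg-\parComp}{2\parCompAgg}\min_j s_j$ for $\delta$ large. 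Together with the buffered hypothesis $s_iy_i/\mu\in[\parCompAgg,1/\parCompAgg]$ this places $s_i^+y_i/\mu$ inside $[\parComp,1/\parComp]$. Your route could in principle be completed, but the obstacle you identified is an artefact of tracking a nonzero $\alpha_D$; the choice $\alpha_D=0$ dissolves it and makes the limiting $\dir{y}$ computation unnecessary.
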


The proof of Lemma~\ref{lemma:agg-succeeds} is given in Section~\ref{sec:lemma:agg-succeeds}. The next Lemma demonstrates that the term $y^T \conWeight$ remains bounded for points generated by our algorithm satisfying the aggressive step criterion.

\begin{restatable}{lemma}{lemYWbounded}\label{lem:yw-bounded}
Suppose assumptions~\ref{assume:diff} and \ref{assume:parameters} hold. Then $\conWeight^T y$ is bounded above for all points $(\mu, x, s, y)$ that satisfy \eqref{eq:barrier-primal-sequence-nice} and the following.
\begin{enumerate}
\item The criterion for an aggressive step \eqref{agg-criteron}.
\item Neither the infeasibility termination criterion~\eqref{terminate-primal-infeasible} nor the unboundness criterion~\eqref{terminate-dual-infeasible}.
\end{enumerate}
\end{restatable}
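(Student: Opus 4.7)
The plan is to argue by contradiction. Suppose there is a sequence $\{(\mu^k,x^k,s^k,y^k)\}$ satisfying \eqref{eq:barrier-primal-sequence-nice}, the aggressive criterion \eqref{agg-criteron}, and neither termination criterion, but with $\conWeight^T y^k \to \infty$. First I would use the failure of the unboundedness criterion \eqref{terminate-dual-infeasible} to confine $\{x^k\}$ to a compact set, so that by Assumption~\ref{assume:diff} the quantities $\|a(x^k)\|_\infty$, $\|\grad f(x^k)\|_1$, and $\|\grad a(x^k)^T e\|_1$ are uniformly bounded. Combining the identity $\grad a(x)^T y = \grad_x \Lag_\mu(x,y) - \grad f(x) + \parConRegularizer \mu \grad a(x)^T e$ with the switching condition \eqref{agg-criteron-farkas} and the complementarity buffer \eqref{agg-criteron-buffer} (which gives $s^T y \le m\mu/\parCompAgg$, and $\mu \le \mu^0$) yields a uniform bound $\|\grad a(x^k)^T y^k\|_1 \le K$.

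Next, taking the inner product of the shifted primal feasibility equation \eqref{restate:eq:primal-feasibility} with $y^k$ yields the key identity $\mu^k \conWeight^T y^k = a(x^k)^T y^k + (s^k)^T y^k$. Since the infeasibility criterion \eqref{terminate-primal-infeasible} is not met, for each $k$ at least one of (i) $a(x^k)^T y^k \le 0$, (ii) $\infeasFuncOne(x^k,y^k) > \TOLinfOne$, or (iii) $\infeasFuncTwo(\mu^k,x^k,s^k,y^k) > \TOLinfTwo$ holds, and by pigeonhole we may pass to a subsequence where a single one of (i)--(iii) holds for every $k$.

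In case (i), the key identity gives $\mu^k \conWeight^T y^k \le (s^k)^T y^k \le m\mu^k/\parCompAgg$, so $\conWeight^T y^k \le m/\parCompAgg$, a contradiction. In case (iii), rearranging the definition of $\infeasFuncTwo$ gives the uniform bound $\|y^k\|_1 \le (K + m\mu^0/\parCompAgg)/\TOLinfTwo$, and since $\conWeight^T y^k \le \|\conWeight\|_\infty \|y^k\|_1$, we again contradict $\conWeight^T y^k \to \infty$.

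The main obstacle will be case (ii). There we deduce $a(x^k)^T y^k < K/\TOLinfOne$ and thus $\mu^k \conWeight^T y^k \le K/\TOLinfOne + m\mu^0/\parCompAgg$; this controls only the product $\mu^k \conWeight^T y^k$, and an unbounded $\conWeight^T y^k$ then forces $\mu^k \to 0$. To close this case I plan to exploit the negations of (i) and (iii) jointly: the negation of (iii) provides $\|y^k\|_1 \ge (\TOLinfOne a(x^k)^T y^k + (s^k)^T y^k)/\TOLinfTwo$ while the case hypothesis keeps $\infeasFuncOne$ strictly above $\TOLinfOne$. Normalising by $\tilde y^k := y^k/\|y^k\|_1$ and passing to a convergent subsequence (using compactness of $\{x^k\}$ and continuity of $\grad a$) leads to $\grad a(x^\star)^T \tilde y^\star = 0$ and $a(x^\star)^T \tilde y^\star = 0$ at a limit point, contradicting the persistence of the strict inequality $\infeasFuncOne > \TOLinfOne$ in the limit. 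This last rescaling argument is where the technical work concentrates, and I expect it to rely on the aggressive dual-feasibility condition \eqref{agg-criteron-opt} to keep $\|\grad \Lag_{\mu^k}(x^k,y^k)\|_\infty$ controlled relative to $\mu^k\|y^k\|_\infty$ as the normalisation is taken.
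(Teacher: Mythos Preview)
Your handling of cases (i) and (iii) matches the paper's argument. The paper, however, does not pass to sequences or argue by contradiction: it treats the three disjunctive cases pointwise and bounds $\conWeight^T y$ directly in each. In the case $\infeasFuncOne>\TOLinfOne$ (your case (ii)) the paper simply writes
\[
\conWeight^T y \;<\; s^T y + \frac{\|\grad a(x)^T y\|_1}{\TOLinfOne}
\;\le\; s^T y + \frac{\|\grad f(x)-\parConRegularizer\mu\,\grad a(x)^T e\|_1 + \|\grad\Lag_\mu(x,y)\|_1}{\TOLinfOne}
\;\le\; s^T y + \frac{2\|\grad f(x)-\parConRegularizer\mu\,\grad a(x)^T e\|_1}{\TOLinfOne},
\]
invoking $a(x)+s=\mu\conWeight$ for the first step, the triangle inequality for the second, and \eqref{agg-criteron-farkas} for the third, and then observes the right side is bounded because $s^Ty\le m\mu/\parCompAgg$ and $x$ is confined to a compact set. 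So the paper's route here is a one-line estimate rather than a rescaling/limit argument; you correctly noticed that the identity $\mu\conWeight^Ty=a(x)^Ty+s^Ty$ only delivers a bound on $\mu\conWeight^Ty$, which is worth comparing carefully against the paper's first inequality above.

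Your proposed rescaling argument for case (ii) has a genuine gap. After normalising to $\tilde y^k=y^k/\|y^k\|_1$ and passing to a limit you obtain $\grad a(x^\star)^T\tilde y^\star=0$ and $a(x^\star)^T\tilde y^\star=0$. But $\infeasFuncOne(x,y)=\|\grad a(x)^T y\|_1/a(x)^T y$ is homogeneous of degree zero in $y$, so normalising leaves it unchanged, and knowing that its numerator and denominator both tend to zero tells you nothing about the limiting ratio. The strict inequality $\infeasFuncOne(x^k,y^k)>\TOLinfOne$ therefore does \emph{not} yield a contradiction at the limit point (it becomes the vacuous $0\ge\TOLinfOne\cdot 0$). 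Your closing appeal to \eqref{agg-criteron-opt} does not rescue this: when $\|y^k\|_\infty>100$ that condition gives only $\|\grad\Lag_{\mu^k}\|_\infty\le\mu^k\|y^k\|_\infty/100$, which is compatible with $\|\grad a(x^k)^T y^k\|_1$ of order $\mu^k\|y^k\|_1$ and hence provides no control on the ratio $\infeasFuncOne$. The ``negation of (iii)'' inequality you record is a \emph{lower} bound on $\|y^k\|_1$, which goes in the wrong direction for bounding $\conWeight^Ty^k$ from above and does not combine with the rest of your sketch to produce a contradiction.
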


The proof of Lemma~\ref{lem:yw-bounded} is given in Section~\ref{sub:lem:yw-bounded}.  Lemma~\ref{lem:yw-bounded} shows $\conWeight^T y$ is bounded above. It follows that the slack variables $s_i$ for $\conWeight_i > 0$ are bounded away from zero. This enables us to lower bound the minimum aggressive step size $\minStepFunc(\mu, s)$, leading to Corollary~\ref{coro:agg-finite}.

\begin{corollary}\label{coro:agg-finite}
Suppose assumptions~\ref{assume:diff} and \ref{assume:parameters} hold. 
After a finite number of calls to Algorithm~\ref{alg:simple-agg-step}, starting from a point that satisfies \eqref{restate:eq:barrier-primal-sequence-nice}, Algorithm~\ref{simple-one-phase} will terminate.
\end{corollary}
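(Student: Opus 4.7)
The plan is to proceed by contradiction: suppose that Algorithm~\ref{simple-one-phase} makes infinitely many calls to Algorithm~\ref{alg:simple-agg-step}, and index these aggressive iterations by $\{k_j\}_{j \in \N}$. At each such $k_j$ the iterate $(\mu^{k_j}, x^{k_j}, s^{k_j}, y^{k_j})$ satisfies both \eqref{restate:eq:barrier-primal-sequence-nice} and the aggressive-step criterion \eqref{agg-criteron}, and since the algorithm has not terminated at $k_j$, none of \termination{} holds there. I will show that these conditions force \eqref{terminate-kkt} to be satisfied at all sufficiently large $k_j$, which is the desired contradiction.

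The central technical step is a $\mu$-independent lower bound on the primal step size $\alpha_P^{k_j}$. Since neither \eqref{terminate-primal-infeasible} nor \eqref{terminate-dual-infeasible} triggers, Lemma~\ref{lem:yw-bounded} provides a constant $M$ with $\conWeight^T y^{k_j} \leq M$. Combined with $y \geq 0$, this gives $y_i^{k_j} \leq M/\conWeight_i$ for each $i$ with $\conWeight_i > 0$; the aggressive complementarity buffer $s_i y_i \geq \parCompAgg \mu$ from \eqref{agg-criteron-buffer} then yields $s_i^{k_j}/\conWeight_i \geq \parCompAgg \mu^{k_j}/M$. Substituting into the definition \eqref{simple-min-step-size-aggresssive} of $\minStepFunc$, the $\mu^{k_j}$ factors cancel, leaving $\minStepFunc(\mu^{k_j}, s^{k_j}) \geq c := \min\{1/2,\, (\parCompAgg - \parComp)/(2M)\} > 0$, uniformly in $j$ (with the standard empty-minimum convention covering $\conWeight = 0$). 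Whether Algorithm~\ref{simple-one-phase} exits the aggressive block via line A.6.2 or A.6.3, Lemma~\ref{lemma:agg-succeeds} ensures that Algorithm~\ref{alg:simple-agg-step} returns $\status = \success$, so $\alpha_P^{k_j} \geq \minStepFunc(\mu^{k_j}, s^{k_j}) \geq c$.

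Because aggressive steps use $\gamma = 0$, the update \eqref{eq:muVarUpdate} yields $\mu^{k_j + 1} \leq (1-c) \mu^{k_j}$; stabilization steps leave $\mu$ unchanged, so $\mu^{k_j} \leq (1-c)^{j-1}\mu^{k_1} \to 0$. I would then bound each component of \eqref{terminate-kkt} at $(\mu^{k_j}, x^{k_j}, s^{k_j}, y^{k_j})$: the primal residual is $\|\cons(x^{k_j}) + s^{k_j}\|_\infty = \mu^{k_j}\|\conWeight\|_\infty$ by \eqref{restate:eq:primal-feasibility}; the scaled complementarity is $\sigma(y^{k_j}) \|S^{k_j} y^{k_j}\|_\infty \leq \mu^{k_j}/\parCompAgg$ by \eqref{agg-criteron-buffer} together with $\sigma(y) \leq 1$; and using $\grad \Lag_0 = \grad \Lag_\mu + \mu\parConRegularizer \grad\cons(x)^T \ones$ and \eqref{agg-criteron-opt}, the scaled stationarity is at most $\mu^{k_j}(1 + \parConRegularizer\|\grad\cons(x^{k_j})^T\ones\|_\infty)$. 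Because the unboundedness criterion \eqref{terminate-dual-infeasible} does not fire, $\{x^{k_j}\}$ stays in a bounded set, and Assumption~\ref{assume:diff} then bounds $\grad\cons$ uniformly along the subsequence. Hence all three quantities go to zero with $\mu^{k_j}$, so \eqref{terminate-kkt} holds at all sufficiently large $k_j$.

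The main obstacle I anticipate is the cancellation argument of the second paragraph, which must combine Lemma~\ref{lem:yw-bounded} with the $[\parCompAgg, 1/\parCompAgg]$ bracket from \eqref{agg-criteron-buffer} carefully enough to eliminate the apparent $1/\mu$ dependence in $\minStepFunc$. Once that uniform lower bound $c$ is secured, the remaining bounds on \eqref{terminate-kkt} are routine consequences of the aggressive criterion \eqref{agg-criteron} and the primal-feasibility identity \eqref{restate:eq:primal-feasibility}.
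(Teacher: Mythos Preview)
Your proof is correct and follows essentially the same route as the paper: both use Lemma~\ref{lem:yw-bounded} to bound $\conWeight^T y$, combine this with the complementarity bracket to cancel the $1/\mu$ factor in $\minStepFunc$ and obtain a uniform lower bound on $\alpha_P$, and then conclude that $\mu$ decreases geometrically until the aggressive criterion forces \eqref{terminate-kkt}. Your treatment is in fact somewhat more explicit than the paper's in verifying the three components of \eqref{terminate-kkt}, but the underlying argument is identical.
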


\begin{proof}
For any successful step size $\alpha_{P}$ by \eqref{simple-min-step-size-aggresssive} for an aggressive step and $\frac{s_i y_i}{\mu} \in [\parComp, 1 / \parComp]$ we have
\begin{flalign*}
\alpha_{P} \ge \minStepFunc(\mu, s)  &= \min \left\{ 1/2, \frac{(\parCompAgg - \parComp)}{2  \parCompAgg \mu }  \min_{i \in \{ 1, ..., \ncon \} : \conWeight_i > 0}{\frac{s_i}{w_i}}  \right\} \\
&\ge \min\left\{ 1/2, \frac{(\parCompAgg - \parComp)}{2  \parCompAgg^2 }  \min_{i \in \{ 1, ..., \ncon \} : \conWeight_i > 0}{\frac{1}{y_i w_i}}  \right\}.
\end{flalign*}
Since Lemma~\ref{lem:yw-bounded} bounds $y^T \conWeight$ from above and $y, \conWeight \ge 0$ we deduce $\alpha_{P}$ is bounded away from zero.
We reduce $\mu$ by at least $\alpha_{P} \mu$ each call to Algorithm~\ref{alg:aggressive} that terminates with $\status = \success$. Furthermore, for sufficiently small $\mu$, whenever \eqref{agg-criteron} holds the optimality criterion \eqref{terminate-kkt} is satisfied. Combining these facts proves the Lemma.
\end{proof}

However, Corollary~\ref{coro:agg-finite} does not rule out the possibility that there is an infinite number of consecutive stabilization steps. Ruling out this possibility is the purpose of Lemma~\ref{lemConsecutiveStable}.

\subsubsection{Convergence of stabilization steps}\label{conv:stb}

This subsection is devoted to showing that consecutive stabilization steps eventually satisfy the criterion for an aggressive step or the unboundedness criterion is satisfied.

\begin{restatable}{lemma}{lemConsecutiveStable}\label{lemConsecutiveStable}
Suppose assumptions~\ref{assume:diff} and \ref{assume:parameters} hold. 
After a finite number of consecutive stabilization steps within Algorithm~\ref{simple-one-phase}, either the aggressive criterion~\eqref{agg-criteron} or the unboundedness termination criterion~\eqref{terminate-dual-infeasible} is met.
\end{restatable}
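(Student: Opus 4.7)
The plan is to argue by contradiction: suppose Algorithm~\ref{simple-one-phase} executes an infinite sequence of consecutive stabilization steps at which neither \eqref{agg-criteron} nor \eqref{terminate-dual-infeasible} holds. Along such a run $\mu$ is fixed (since $\gamma = 1$), and \eqref{eq:slackVarUpdate} preserves $a(x^k) + s^k = \mu \conWeight$. First I would establish boundedness of the iterates: failure of \eqref{terminate-dual-infeasible} bounds $\|x^k\|_\infty$, so by continuity of $a$ the slack $s^k = \mu \conWeight - a(x^k)$ is bounded above. The regularizer $\parConRegularizer a_i(x)$ in $\barrier_\mu$ prevents divergence to $-\infty$ as $a_i(x) \to -\infty$, while $-\log(\mu \conWeight_i - a_i(x))$ diverges as any $s^k_i \to 0^+$; this forces $\barrier_\mu$, and hence $\phi_\mu$, to be bounded below on the relevant region. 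Because \eqref{eq:phi-sufficient-progress} is strictly decreasing in $\phi_\mu$, the iterates stay strictly interior to the barrier, so $s^k_i$ is uniformly bounded away from $0$, and then \eqref{eq:comp-slack} gives matching upper and lower bounds on $y^k$.

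Next I would translate the telescoped sufficient decrease into vanishing residuals. Since $\gamma=1$, the direction solves $(\Schur + \delta I) \dir{x}^k = -\grad \barrier_\mu(x^k)$, and the choice $\delta \ge \delta_{\min}$ in line~\ref{alg-simple-delta-min} guarantees $\Schur + \delta I \succeq (\mu/2) I$, so $\dir{x}^k$ is a descent direction with $-\grad \barrier_\mu(x^k)^T \dir{x}^k \gtrsim \|\grad \barrier_\mu(x^k)\|^2$ on the compact set of Step~1. A standard Armijo-type argument using Lipschitz continuity of $\grad \barrier_\mu$ on that set yields $\alpha_P^k$ bounded below by a positive constant. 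Telescoping \eqref{eq:phi-sufficient-progress} and using boundedness below of $\phi_\mu$ gives
\begin{equation*}
\sum_k \alpha_P^k \left[ -\tfrac{1}{2}\grad \barrier_\mu(x^k)^T \dir{x}^k + \tfrac{\delta}{4}\alpha_P^k\|\dir{x}^k\|^2 + \frac{\|S^k y^k - \mu \ones\|_\infty^3}{\mu^2} \right] < \infty,
\end{equation*}
whence $\grad \barrier_\mu(x^k) \to 0$ and $\|S^k y^k - \mu \ones\|_\infty \to 0$.

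Finally I would show these two limits imply \eqref{agg-criteron} for large $k$. A direct calculation gives
\begin{equation*}
\grad \barrier_\mu(x) = \grad \obj(x) + \sum_i \left( \frac{\mu}{\mu \conWeight_i - a_i(x)} - \parConRegularizer \mu \right) \grad a_i(x),
\end{equation*}
and $\|S^k y^k - \mu \ones\|_\infty \to 0$ together with primal feasibility gives $y^k_i - \mu/(\mu \conWeight_i - a_i(x^k)) \to 0$, so $\grad \barrier_\mu(x^k) = \grad_x \Lag_\mu(x^k, y^k) + o(1)$. Hence \eqref{agg-criteron-opt} holds eventually (recall $\sigma(y)\le 1$ and $y^k$ is bounded), \eqref{agg-criteron-buffer} holds eventually because $\parCompAgg > \parComp$ and $\|S^k y^k/\mu - \ones\|_\infty \to 0$, and \eqref{agg-criteron-farkas} holds because its right-hand side stays bounded below by roughly $s^{k\top} y^k \approx m\mu > 0$ while the left-hand side tends to $0$. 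This contradicts the assumption that \eqref{agg-criteron} never fires.

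The main obstacle is the uniform lower bound on $\alpha_P^k$ in Step~2: the usual Armijo analysis needs Lipschitz continuity of $\grad \barrier_\mu$, which fails near the boundary $\{a(x) = \mu \conWeight\}$. The monotonic decrease of $\phi_\mu$ gives an \emph{a priori} separation from this boundary, and one must combine that separation with the curvature of the cubic complementarity term in $\phi_\mu$ to argue that the line search cannot reject arbitrarily small step sizes; this is the delicate part that I expect to dominate the proof.
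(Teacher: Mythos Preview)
Your proposal is correct and tracks the paper's own argument closely: both confine the stabilization iterates to a compact set (you via failure of \eqref{terminate-dual-infeasible} plus monotone decrease of $\phi_\mu$; the paper via Lemma~\ref{lem:compact-Q} and Corollary~\ref{coro:bound-everything}), extract uniform Lipschitz and spectral bounds on $\Schur$, and then use the descent inequality $\dir{x}^T\grad\barrier_\mu(x)\le -\|\grad\barrier_\mu(x)\|^2/(3L+\mu)$ together with a careful treatment of the cubic complementarity term to lower-bound the accepted step size. The only structural difference is the direction of the final contradiction: the paper argues that failure of \eqref{agg-criteron} keeps $\max\{\|\grad\barrier_\mu(x)\|,\|Sy-\mu e\|_\infty\}$ bounded away from zero (this is packaged into Corollary~\ref{coro:bound-everything}), so $\phi_\mu$ drops by a fixed amount each step and hence $\|x^k\|\to\infty$; you instead show $\phi_\mu$ is bounded below, telescope to force $\grad\barrier_\mu(x^k)\to 0$ and $\|S^ky^k-\mu e\|_\infty\to 0$, and then verify the three sub-conditions of \eqref{agg-criteron} by hand. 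These are contrapositives of one another, and your explicit verification of \eqref{agg-criteron-opt}--\eqref{agg-criteron-buffer} is exactly what the paper's Corollary~\ref{coro:bound-everything} asserts without spelling out.
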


The proof of Lemma~\ref{lemConsecutiveStable} is given in Section~\ref{sec:lemConsecutiveStable}. Let us sketch the main ideas. First, we can show that the set of iterates that stabilization steps generate remain in a compact set. We can use this to uniformly bound quantities such as Lipschitz constants. The crux of the proof is showing there cannot be an infinite number of consecutive stabilizations steps because this will imply the augmented log barrier merit function tends to negative infinity and the unboundedness termination criterion holds.

\subsubsection{Main result}\label{subsec:main-result}

We now state Theorem~\ref{thm:global-convergence}, the main theoretical result of the paper.

\begin{theorem}\label{thm:global-convergence}
Suppose assumptions~\ref{assume:diff} and \ref{assume:parameters} hold. Algorithm~\ref{simple-one-phase} terminates after a finite number of iterations.
\end{theorem}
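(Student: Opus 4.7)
The plan is to combine the two ingredients already established: Corollary~\ref{coro:agg-finite}, which bounds the total number of aggressive steps, and Lemma~\ref{lemConsecutiveStable}, which bounds the length of any run of consecutive stabilization steps. Together these two finite bounds should upper bound the total iteration count.

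More concretely, I would argue as follows. Partition the sequence of iterations performed by Algorithm~\ref{simple-one-phase} into maximal blocks separated by aggressive steps: each block consists of zero or more consecutive stabilization steps followed by one aggressive step (with a final possibly-unterminated block of stabilization steps). Assume for contradiction that the algorithm does not terminate. Then none of the termination criteria \eqref{terminate-kkt}, \eqref{terminate-primal-infeasible}, \eqref{terminate-dual-infeasible} is ever met, and in particular the unboundedness criterion never triggers. Lemma~\ref{lemConsecutiveStable} then implies that every run of consecutive stabilization steps is finite, so the algorithm must take infinitely many aggressive steps. But by construction each aggressive step is entered from a point satisfying \eqref{restate:eq:barrier-primal-sequence-nice} and the switching criterion \eqref{agg-criteron}, so Corollary~\ref{coro:agg-finite} yields a contradiction: only finitely many such aggressive calls can occur before termination. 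Hence the algorithm must terminate, and the total number of iterations is bounded by the (finite) number of aggressive steps supplied by Corollary~\ref{coro:agg-finite} times the uniform bound on stabilization-step blocks given by Lemma~\ref{lemConsecutiveStable}.

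The routine technical check is that Algorithm~\ref{simple-one-phase} preserves the invariant \eqref{restate:eq:barrier-primal-sequence-nice} at every iterate, so that the hypotheses of Corollary~\ref{coro:agg-finite} and Lemma~\ref{lemConsecutiveStable} are indeed satisfied at each step. This follows directly from the iterate update~\eqref{eq:iterate-update}: the nonlinear slack update \eqref{eq:slackVarUpdate} enforces \eqref{restate:eq:primal-feasibility}, and the acceptance rule \eqref{eq:comp-slack-plus} built into both Algorithm~\ref{alg:simple-agg-step} and Algorithm~\ref{alg:simple-stb-step} enforces \eqref{restate:eq-domain}--\eqref{restate:eq:comp-slack}. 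One also verifies that line~\ref{simple-agg-large-delta} of Algorithm~\ref{simple-one-phase} is well-defined, which is precisely Lemma~\ref{lemma:agg-succeeds}, so aggressive calls cannot get stuck.

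The main obstacle sits inside Lemma~\ref{lemConsecutiveStable} (which is proved elsewhere): ruling out an infinite run of stabilization steps requires showing that the augmented barrier merit function $\phi_{\mu}$ cannot decrease indefinitely without either triggering \eqref{agg-criteron} or forcing $\|x\|_\infty \to \infty$ (and hence \eqref{terminate-dual-infeasible}). Given that lemma and Corollary~\ref{coro:agg-finite}, the remaining argument is the short counting argument above, so no further work is needed in the proof of Theorem~\ref{thm:global-convergence} itself.
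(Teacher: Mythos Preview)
Your proposal is correct and follows essentially the same route as the paper's own proof: invoke Lemma~\ref{lemConsecutiveStable} to show each run of stabilization steps is finite, then invoke Corollary~\ref{coro:agg-finite} to bound the number of aggressive steps, and conclude. One small caveat: Lemma~\ref{lemConsecutiveStable} only guarantees that \emph{each} block of consecutive stabilization steps is finite (the bound may depend on $\mu$ and the entry point of the block), not a single uniform bound across all blocks, so your final ``product'' sentence overstates slightly---but your contradiction argument does not rely on uniformity and is sound as stated.
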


\begin{proof}
Corollary~\ref{coro:agg-finite} shows that the algorithm must terminate after a finite number of aggressive steps. Lemma~\ref{lemConsecutiveStable} shows that the algorithm terminates or an aggressive step must be taken after a finite number of stabilization steps. The result follows.
\end{proof}

\section{A practical one-phase algorithm}\label{sec:practical-alg}

Section~\ref{sec:simple-alg} presented a simple one-phase algorithm that is guaranteed to terminate (eventually) with a certificate of unboundedness, infeasibility, or optimality. However, to simplify, we omitted several practical details including the following.
\begin{enumerate}
\item The introduction of inner iterations that reuse the factorizations of $\Schur$. This reduces the total number of factorizations (Section~\ref{schur-reuse}).
\item How to choose the step sizes $\alpha_{P}$ and $\alpha_{D}$ in a practical manner (Section~\ref{step-acceptance}).
\item Using a filter to encourage steps that significantly reduce the KKT error (Section~\ref{sec:filter}).
\item How to compute $\delta$ in a practical manner (Section~\ref{sec:practical-alg-outline} and Appendix~\ref{sec:mat-fact}).
\item How choose the initial variable values (Appendix~\ref{sec:initialization}).
\end{enumerate}
The full algorithm is described in Section~\ref{sec:practical-alg-outline} with appropriately modified stabilization and aggressive steps. For complete details see the implementation at \url{https://github.com/ohinder/OnePhase}.

\subsection{Reusing the factorization of $\Schur$}\label{schur-reuse}

Our practical algorithm (Algorithm~\ref{practical-one-phase-IPM}) consists of inner iterations and outer iterations. The inner iterations reuse the factorization of the matrix $\Schur$ to compute directions as follows. Let $(\mu, x, s, y)$ be the current iterate and $(\hat{\mu}, \hat{x}, \hat{s}, \hat{y})$ be the iterate at the beginning of the outer iteration, i.e., where $\Schur$ was evaluated. Then the directions are computed during each inner iteration as follows:
\begin{subequations}\label{practical-direction}
\begin{flalign}
(\Schur + \delta I)  \dir{x} &= -\left( b_{D} + \grad \cons(\hat{x})^T \hat{S}^{-1} \left( Y b_{P} - b_{C} \right) \right) \label{practical-dx} \\
\dir{s} &\gets -(1 - \gamma) \mu \conWeight - \grad \cons(x)  \dir{x} \label{practical-compute-ds}  \\
\dir{y} &\gets  -\hat{S}^{-1} \hat{Y} (\grad \cons(\hat{x})  \dir{x} + b_{P} - \hat{Y}^{-1} b_{C})  \label{practical-compute-ds} .
\end{flalign}
\end{subequations}
This is identical to the direction computation described in Section~\ref{sub:direction-computation} if $(\mu, x, s, y) = (\hat{\mu}, \hat{x}, \hat{s}, \hat{y})$.

\subsection{Step size choices and acceptance}\label{step-acceptance}
First we specify a criterion to prevent the slack variables from getting too close to the boundary. In particular, given any candidate primal iterate $(x^{+}$, $s^{+})$ we require that the following \fracBound{} rule be satisfied:
\begin{flalign}\label{fracBoundary-primal}
s^{+} \ge  \parFracBoundary \min\{ s, \| d_{x} \|_{\infty} \left( \delta + \| d_{y} \|_{\infty} + \| d_{x} \|_{\infty}^{\parFracBoundaryExp} \right) \ones \},
\end{flalign}
where $\parFracBoundary$ is a diagonal matrix with entries $\parFracBoundary_{i,i} \in \parFracBoundaryInterval$ with default entry values of $\parFracBoundaryValue$  and $\parFracBoundaryExp \in \parFracBoundaryExpInterval$ with default value of $\parFracBoundaryExpValue$. Note that the $\| d_{x} \|_{\infty} \left( \delta + \| d_{y} \|_{\infty} + \| d_{x} \|_{\infty}^{\parFracBoundaryExp} \right)$ term plays a similar role to $\mu$ in the more typical $s^{+} \ge  \parFracBoundary s \min\{ 1 , \mu \}$ \fracBound{} rule (say of IPOPT \cite{wachter2006implementation}) in that it allows (almost) unit steps in the superlinear convergence regime. 

In both the aggressive steps and stabilization steps we use a backtracking line search. 
We choose the initial trial primal step size $\alpha_{P}^{\max}$ to be the maximum $\alpha_{P} \in [0,1]$ that satisfies the \fracBound{} rule:
\begin{flalign}\label{fracBoundaryPrimalMax}
s + \alpha_{P} \dir{s} &\ge  \parFracBoundaryMax  \min\{ s, \| d_{x} \|_{\infty} \left( \delta + \| d_{y} \|_{\infty} + \| d_{x} \|_{\infty}^{\parFracBoundaryExp} \right)  \ones \},
\end{flalign}
where the parameter $\parFracBoundaryMax$ is a diagonal matrix with entries $\parFracBoundaryMax_{i,i} \in \parFracBoundaryMaxInterval$. The default value of $\parFracBoundaryMax_{i,i}$ is $\parFracBoundaryMaxValueNL$ for nonlinear constraints and $\parFracBoundaryMaxValueLinear$ for linear constraints. The idea of this choice for $\alpha_{P}^{\max}$ is that the \fracBound{} rule \eqref{fracBoundary-primal} is likely to be satisfied for the first trial point, i.e., $\alpha_{P} = \alpha_{P}^{\max}$ when $\| d_{x} \|$ is small. To see this, note that by differentiability of $a$ we have $\| s + d_{s} - s^{+} \| \le \| a(x) + \grad a(x) \dir{x} - a(x + \dir{x}) \| = O(\| \dir{x} \|^2)$. Furthermore, the right hand side of \eqref{fracBoundary-primal} and \eqref{fracBoundaryPrimalMax} are identical except for $\parFracBoundary$ and $\parFracBoundaryMax$. Hence if $\dir{x} \rightarrow 0$, \eqref{fracBoundaryPrimalMax} holds and $\parFracBoundaryMax_{i,i} >  \parFracBoundary_{i,i}$ for $i$ corresponding to nonlinear constraints then \eqref{fracBoundary-primal} is satisfied in the limit for $\alpha_{P} = \alpha_{P}^{\max}$.

It remains to describe how to update the dual variables. Given some candidate primal iterate $(x^{+}$, $s^{+})$,  let $B( s^{+}, \dir{y} )$ be the set of feasible dual step sizes. More precisely,  we define $B( s^{+}, \dir{y} ) \subseteq [0,1]$ to be the largest interval such that if $\alpha_{D} \in B( s^{+}, \dir{y} )$ then
\begin{subequations}\label{subeq:set-B}
\begin{flalign}
 \frac{s^{+}_i (y + \alpha_{D} \dir{y})_i}{\mu^{+}} &\in [\parComp, 1/\parComp ] ~~ \forall i \in \{ 1, \dots, m \} \label{satisfy-comp} \\
y + \alpha_{D} \dir{y} &\ge  \parFracBoundary y \min\{ 1 , \| \dir{x} \|_{\infty} \}. \label{fracBoundary-dual} %
\end{flalign}
\end{subequations}
If there is no value of $\alpha_{D}$ satisfying \eqref{subeq:set-B} we set $B( s^{+}, \dir{y} )$ to the empty set and the step will be rejected. Recall the parameter $\parComp \in (0,1)$ was defined in \eqref{eq:comp-slack}. The purpose criteria \eqref{satisfy-comp} is to ensure the algorithm always satisfy \eqref{eq:comp-slack}, i.e., $\frac{S y}{\mu} \in [\ones \parComp, e/\parComp ]$. Equation~\eqref{fracBoundary-dual} is a \fracBound{} condition for the dual variables.
 We compute the dual step size as follows:
\begin{subequations}\label{eq:compute-alpha-D}
\begin{flalign}
\alpha_{D} &\gets \arg \min_{\zeta \in B( s^{+}, \dir{y} )} \| S^{+} y - \mu^{+} + \zeta S^{+} \dir{y} \|^2_{2} + \| \grad \obj(x^{+})   +  \grad \cons(x^{+})^T (y + \zeta \dir{y}) \|^{2}_{2} %
\label{eq:alphaD-least-squares} \\
 \alpha_{D} &\gets \min\{ \max\{ \alpha_{D}, \alpha_{P} \}, \max B( s^{+}, \dir{y} ) \}. \label{min:alpha-D}
\end{flalign}
\end{subequations}
Equation~\eqref{eq:alphaD-least-squares} can be interpreted as choosing the step size $\alpha_{D}$ that minimizes the complementarity and dual infeasibility. This reduces to a one-dimensional least squares problem in $\zeta$ which has a closed form expression for the solution. Equation~\eqref{min:alpha-D} encourages the dual step size to be at least as large as the primal step size $\alpha_{P}$. This prevents tiny dual step sizes being taken when the dual direction is not a descent direction for the dual infeasibility, which may occur if $\delta$ is large. %

\subsection{A filter using a KKT merit function}\label{sec:filter}

In the stabilization search directions we accept steps that make progress on one of two merit functions, which form a filter. The first function $\phi_{\mu}$ is defined in Section~\ref{sec:simple-stable}. The second function, we call it the KKT merit function, measures the scaled dual feasibility and complementarity:
\begin{flalign}\label{merit-KKT}
\meritKKT_{\mu} ( x, s, y )  = \sigma( y ) \max\{ \| \grad \Lag_{\mu}(x, y ) \|_{\infty} ,  \| S y - \mu \ones \|_{\infty} \}.
\end{flalign}
This merit function measures progress effectively in regimes where $\Schur$, given in \eqref{eq:Schur-matrix}, is positive definite. In this case, the search directions generated by \eqref{eq:Schur-complement-system} will be a descent direction on this merit function (for the first inner iteration of each outer iteration of Algorithm~\ref{practical-one-phase-IPM} i.e., $j = 1$). This merit function is similar to the potential functions used in interior point methods for convex optimization \cite{andersen1998computational,huang2016solution}. Unfortunately, while this merit function may be an excellent choice for convex problems, in nonconvex optimization it has serious issues. In particular, the search direction \eqref{eq:Schur-complement-system} might not be a descent direction. Moreover, changing the search direction to minimize the dual feasibility has negative ramifications. The algorithm could converge to a critical point of the dual feasibility where $\meritKKT_{\mu} ( x, s, y ) \neq 0$\footnote{To see why this occurs one need only consider an unconstrained problem, e.g., minimizing $\obj(x) = x^4 + x^3 + x$ subject to no constraints. The point $x = 0$ is a stationary point for the gradient of $\grad \obj(x)$, but is not a critical point of the function.}. For further discussion of these issues, see \cite{shanno2000interior}.

While it is sufficient to guarantee convergence by accepting steps if \eqref{eq:phi-sufficient-progress} is satisfied, in some regimes e.g., when $\Schur$ is positive definite, this may select step sizes $\alpha_{P}$ that are too conservative;  for example, near points satisfying the sufficient conditions for local optimality. In these situations the KKT error is often a better measure of progress toward a local optimum than a merit function that discards information about the dual feasibility. Furthermore, from our experience, during convergence toward an optimal solution, numerical errors in the function $\phi_{\mu}$ may cause the algorithm to fail to make sufficient progress on the merit function $\phi_{\mu}$, i.e., \eqref{eq:phi-sufficient-progress} is not satisfied for any $\alpha_{P}$. For these reasons we decide to use a filter approach \cite{fletcher2002nonlinear,wachter2006implementation}. Typical filter methods \cite{fletcher2002nonlinear} require progress on either the constraint violation or the objective function. Our approach is distinctly different, because we accept steps that make progress on either the merit function $\phi_{\mu}$ or the merit function $\meritKKT_{\mu}$.
To be precise, we accept any iterate $(\mu^{+}, x^{+}, s^{+}, y^{+})$ that makes sufficient progress on the augmented log barrier function $\phi_{\mu}$, or satisfies the equations
\begin{subequations}\label{eq:filter}
\begin{flalign}
\meritKKT_{\mu} (x^{+}, s^{+}, y^{+}) &\le (1 - \parKKTReductFactor \alpha_{P} ) \meritKKT_{\mu} (\tilde{x}, \tilde{s}, \tilde{y}) \label{eq:kkt-progress} \\
\phi_{\mu}(x^{+}, s^{+}, y^{+}) &\le \phi_{\mu}(\tilde{x}, \tilde{s}, \tilde{y}) + \sqrt{\meritKKT_{\mu} (\tilde{x}, \tilde{s}, \tilde{y})} %
\end{flalign} %
\end{subequations}
 for every previous iterate $(\tilde{\mu}, \tilde{x}, \tilde{s}, \tilde{y})$ with $\cons(\tilde{x}) + \tilde{s} = \cons(x) + s$.

The idea of \eqref{eq:filter} is that for points with similar values of the augmented log barrier function the KKT error is a good measure of progress. However, we want to discourage the algorithm from significantly increasing the augmented log barrier function while reducing the KKT error because if this is occurring, then the algorithm might converge to a saddle point.

\subsection{Algorithm outline}\label{sec:practical-alg-outline}

The general idea of Algorithm~\ref{practical-one-phase-IPM} follows. At each outer iteration we factorize the matrix $\Schur + \delta \eye$ with an appropriate choice of $\delta$ using Algorithm~\ref{alg:mat-fact}. With this factorization fixed, we attempt to take multiple inner iterations (at most $\parNumCor$), which corresponds to solving system~\eqref{primal-dual-Newton-direction} with different right-hand side choices but the same matrix $\Schur + \delta \eye$. Each inner iteration is either an aggressive step or a stabilization step. If, on the first inner iteration, the step fails (i.e., due to a too small step size), we increase $\delta$ and refactorize $\Schur + \delta \eye$. Note that we evaluate the Hessian of the Lagrangian once per outer iteration. The selection of the initial point $(\mu^{0}, x^{0}, s^{0}, y^{0})$ is described in Section~\ref{sec:initialization}.

\begin{algorithm}[H]
\textbf{Input:} some initial point $x^{0}$, vector $\conWeight \ge 0$, and variables $\mu^0, s^{0}, y^{0} > 0$  such that $\cons(x^{0}) + s^{0} = \conWeight \mu^{0}$ and equation~\eqref{eq:comp-slack}   is satisfied with $k=0$. Termination tolerances $\TOLopt \in \TOLoptInterval$, $\TOLinfOne \in \TOLinfOneInterval$, $\TOLinfTwo \in \TOLinfTwoInterval$ and $\TOLunbounded \in \TOLunboundedInterval$. \\
\textbf{Output:} a point $(\mu, x, s, y)$ that satisfies at least one of the inequalities \termination{}
\vspace{0.1 cm} %
\begin{enumerate}[label*=A.{\arabic*}]
\item \label{line:init-delta} \emph{Initialize.} Set $\delta \gets 0$.
\item \label{line:form-K}  \emph{New outer iteration.} \\
Set $(\hat{\mu}, \hat{x}, \hat{s}, \hat{y}) \gets (\mu, x, s, y)$. Form the matrix $\Schur$ using \eqref{eq:Schur-matrix}. Set $\deltaPrev \gets \delta$.
\item \label{line:factor-schur} \emph{Select $\delta$ and factorize the matrix $\Schur + \delta \eye$,} \\
i.e., run Algorithm~\ref{alg:mat-fact} (see Appendix~\ref{sec:mat-fact}) with: \\
\hspace*{0.1cm}  \textbf{Input:} $\Schur$, $\delta$. \\
\hspace*{0.1cm} \textbf{Output:} New value for $\delta$, factorization of $\Schur + \delta \eye$.
\item \label{take-steps}  \emph{Perform inner iterations where we recycle the factorization of $\Schur + \delta \eye$.} \\
For $j \in \{ 1, \dots , \parNumCor \}$ do:
\begin{enumerate}[label*=.{\arabic*}]
\item \emph{Check termination criterion}. \\
If any of the inequalities \termination{} holds at the point $(\mu,x,s,y)$, terminate.
\item \emph{Take step}\label{line:take-step}
\begin{enumerate}[label=-Case {\Roman*}]
\item If the aggressive step criterion~\eqref{agg-criteron} is satisfied, do an aggressive step, \\
i.e., run Algorithm~\ref{alg:aggressive} with: \\
\hspace*{0.1cm}  \textbf{Input:} the matrix $\Schur + \delta \eye$, its factorization, the point $(\mu, x, s, y)$ and $(\hat{\mu}, \hat{x}, \hat{s}, \hat{y})$. \\
\hspace*{0.1cm}  \textbf{Output:} A $\status$ and a new point $(\mu^{+},x^{+},s^{+},y^{+})$.
\item Otherwise, do a stabilization step, \\
i.e., run Algorithm~\ref{alg:stable} with: \\
\hspace*{0.1cm} \textbf{Input:} the matrix $\Schur + \delta \eye$, its factorization, the point $(\mu, x, s, y)$ and $(\hat{\mu}, \hat{x}, \hat{s}, \hat{y})$. \\
\hspace*{0.1cm} \textbf{Output:} A $\status$ and a new point $(\mu^{+},x^{+},s^{+},y^{+})$.
\end{enumerate}
\item \emph{Deal with failures}. \\
If $\status = \success$ set $(\mu, x, s, y) \gets ( \mu^{+}, x^{+},s^{+},y^{+})$. If $\status = \failure$ and $j = 1$ go to \eqref{increase-delta-for-failure}.  If $\status = \failure$ and $j > 1$ go to step~\eqref{line:form-K}.
\end{enumerate}
\item 
Go to \eqref{line:form-K}.
\item \label{increase-delta-for-failure} \emph{Increase $\delta$ to address failure.} \\
Set $\delta = \max\left\{\parDeltaIncreaseFailure \delta, \quad \parDeltaMin, \quad \deltaPrev \parDeltaDecrease, \quad \frac{\| \grad \Lag_{\mu}(x,y) \|_{\infty}}{ \| d_{x} \|_{\infty} } \right\}$. \\
If $\delta \le \parDeltaMax$ then factorize the matrix $\Schur + \delta \eye$ and go to step \eqref{take-steps}, otherwise terminate with $\status = \maxDelta$.
\end{enumerate}
\caption{A practical one-phase IPM}\label{practical-one-phase-IPM}
\end{algorithm}

\subsubsection{Aggressive steps}

Recall that when computing aggressive search directions we solve system~\eqref{primal-dual-Newton-direction} with $\gamma = 0$; that is, we aim for feasibility and optimality simultaneously. We accept any step size assuming it satisfies the \fracBound{} rule \eqref{fracBoundary-primal} and the set of valid dual step sizes is non-empty: $B( s^{+}, \dir{y} ) \neq \emptyset$ (see equations~\eqref{subeq:set-B}). 

To prevent unnecessary line searches, we only attempt an aggressive line search if

\begin{flalign}
\grad \Lag_{\gamma \mu}(x,\tilde{y})^T \dir{x} < 0 \label{eq:agg-could-improve}
\end{flalign}
where $\tilde{y} = S^{-1} \mu ( \ones \gamma  - (1 - \gamma ) Y \conWeight )$. Note that \eqref{eq:agg-could-improve} always holds if $(\hat{x}, \hat{s},  \hat{y}, \hat{\mu}) = (\mu, x, s, y)$.

The backtracking line search of the aggressive step has a minimum step size. If during the backtracking line search (line~\ref{line:agg-back-too-small} of Algorithm~\ref{alg:aggressive}) the step size $\alpha_{P}$ is smaller than
 \begin{flalign}\label{min-step-size-aggresssive}
\bar{\minStepFunc}(\mu, s) := \min \left\{ 1/2, \frac{\parBacktracking}{4 \mu} \times \min\left\{ \frac{\parCompAgg - \parComp}{\parCompAgg}, 1 - \parFracBoundary_{i,i} \right\} \times  \min_{i \in \{ 1, ..., \ncon \} : \conWeight_i > 0}{\frac{s_{i}}{\conWeight_i} } \right\}
\end{flalign}
then we immediately reject the step and exit Algorithm~\ref{alg:aggressive}. Note that the function $\bar{\minStepFunc}(\mu, s)$ is a more sophisticated version of $\minStepFunc$ used for the simple algorithm and defined in \eqref{simple-min-step-size-aggresssive}. Following this, $\delta$ is increased in Line~\ref{increase-delta-for-failure} of Algorithm~\ref{practical-one-phase-IPM} and a new aggressive step is attempted. It is possible that $\delta$ will be increased many times; however, for sufficiently large $\delta$ an acceptable step will be found (see Lemma~\ref{lemma:agg-succeeds}). 

To choose $\gamma$ in line~\ref{mehrotra-heuristic} of Algorithm~\ref{alg:aggressive} we use a heuristic inspired by Mehrotra's predictor-corrector method. This heuristic only requires a direction computation --- we do not evaluate any functions of the nonlinear program.

It is possible that when we take a step that we reduce $\mu$ significantly but $\| \grad \Lag(x,y) \|_{\infty}$ remains large. Line~\ref{agg-protect-1} and \ref{agg-protect-1} of Algorithm~\ref{alg:aggressive} guard against this possibility. This scheme is controlled by the parameter $\parAggProtect \in \parAggProtectInterval$ with a default value of $\parAggProtectValue$. Values of $\parAggProtect$ close to $1$ give no guarding and close to $0$ are conservative.

\begin{algorithm}[H]
\textbf{Input:} The matrix $\Schur + \delta \eye$, its factorization, the current point $(\mu, x, s, y)$ and the point $(\hat{x}, \hat{s},  \hat{y}, \hat{\mu})$ from the beginning of the outer iteration.  \\
\textbf{Output:} A new point $(\mu^{+}, x^{+}, s^{+}, y^{+})$ and a $\status$ of either $\success$ or $\failure$.
\begin{enumerate}[label*=A.{\arabic*}]
\item \label{mehrotra-heuristic} \emph{Use Mehrotra's predictor-corrector heuristic to choose $\gamma$.} 
\begin{enumerate}[label*=.{\arabic*}]
\item Compute the vector $b$ at the point $(\mu, x, s, y)$ via \eqref{def:b} with $\gamma = 0$.
\item Compute the search direction $(\dir{x}, \dir{s}, \dir{y})$ using \eqref{practical-direction}.
\item Estimate the largest primal step size $\alpha^{\max}_{P}$ from equation~\eqref{fracBoundaryPrimalMax}.
\item Set $\gamma \gets \min\{0.5, (1 - \alpha_{\max})^2 \}$.
\end{enumerate}
\item Compute the vector $b$ at the point $(\mu, x, s, y)$ via \eqref{def:b} with $\gamma$ as chosen in previous step.
\item Compute the search direction $(\dir{x}, \dir{s}, \dir{y})$ using \eqref{practical-direction}.
\item \emph{Check that the direction has a reasonable chance of being accepted.} If \eqref{eq:agg-could-improve} is not satisfied then terminate with $\status = \failure$.
\item Estimate the largest primal step size $\alpha^{\max}_{P}$ from equation~\eqref{fracBoundaryPrimalMax}.
\item \label{agg:line:back-track} \emph{Perform a backtracking linesearch}. Starting from $\alpha_{P} \gets \alpha^{\max}_{P}$ repeat the following steps.
\begin{enumerate}[label*=.{\arabic*}]
\item \label{prac-agg:line:back-too-small} \emph{Check the step size is not too small.} If $\alpha_{P} \le \bar{\minStepFunc}(\mu, s)$ then goto line~\ref{agg-prac:line:terminate}.
\item Compute the trial primal variables $(\mu^{+}, x^{+}, s^{+})$ via~\eqref{eq:iterate-update}.
\item If the \fracBound{} rule \eqref{fracBoundary-primal} is not satisfied, then set $\alpha_{P} \gets \parBacktracking \alpha_{P}$ and go to line~\ref{prac-agg:line:back-too-small}.
\item Compute feasible dual step sizes $B( s^{+}, \dir{y} )$.
\item If $B( s^{+}, \dir{y} ) = \emptyset$ then trial step has failed, then set $\alpha_{P} \gets \parBacktracking \alpha_{P}$ and go to line~\ref{prac-agg:line:back-too-small}.
\item Compute dual variable step size $\alpha_{D}$ using \eqref{eq:compute-alpha-D} and compute the trial dual variables $y^{+}$ using \eqref{eq:update-y}.
\item \label{agg-protect-1} If $\mu^{+} / \mu \ge 1 - \parAggProtect$ goto line~\ref{prac-agg:line:success}.
\item \label{agg-protect-2} Let $\tau \gets \frac{\mu^{+}}{(1 - \parAggProtect) \sigma(y^{+}) \| \grad \Lag_{\mu^{+}}(x^{+},y^{+}) \|_{\infty}}$. If $\tau < 1$ then set $\alpha_{P} \gets \max\{ \parAggProtect^2 , \alpha_{P} \tau^2 \}$ and go to line~\ref{prac-agg:line:back-too-small}.
\item\label{prac-agg:line:success} Terminate with $\status = \success$ and return the point $(\mu^{+}, x^{+}, s^{+}, y^{+})$.
\end{enumerate}
\item \label{agg-prac:line:terminate} Terminate with $\status = \failure$.
\end{enumerate}
\caption{Practical aggressive step}\label{alg:aggressive}
\end{algorithm}

\subsubsection{Stabilization steps}

During the backtracking line search we terminate with $\status = \failure$ if
\begin{flalign}\label{eq:min-step-size-stable}
\alpha_{P} \le \parMinStableStepSize
\end{flalign}
where $\parMinStableStepSize \in \parMinStableStepSizeInterval$ with default value $\parMinStableStepSizeValue$.

We then exit Algorithm~\ref{alg:stable} and go to line~\ref{increase-delta-for-failure} of Algorithm~\ref{practical-one-phase-IPM}, where we increase $\delta$ and attempt a new stabilization step. From Lemma~\ref{lemConsecutiveStable} we know for sufficiently large $\delta$ the stabilization step will succeed.

To prevent unnecessary line searches, we only attempt a stabilization line search if

\begin{flalign}
\grad \barrier_{\mu}(x)^T \dir{x} < 0. \label{eq:obj-could-improve}
\end{flalign}
The idea is to take steps only when it is possible to decrease $\barrier_{\mu}$. This condition is always satisfied if Algorithm~\ref{alg:stable} is called from the first inner iteration of Algorithm~\ref{practical-one-phase-IPM}. It may not be satisfied when the inner iteration is greater than one because we recycle the factorization of $\Schur + \delta \eye$.

\begin{algorithm}[H]
\textbf{Input:} The matrix $\Schur + \delta \eye$, its factorization, the current point $(\mu, x, s, y)$ and the point $(\hat{x}, \hat{s},  \hat{y}, \hat{\mu})$ from the beginning of the outer iteration.   \\
\textbf{Output:} A new point $(\mu^{+}, x^{+}, s^{+}, y^{+})$ and a $\status$ of either $\success$ or $\failure$
\begin{enumerate}[label*=A.{\arabic*}]
\item Compute the vector $b$ at the point $(\mu, x, s, y)$ via \eqref{def:b} with $\gamma = 1$.
\item Compute the search direction $(\dir{x}, \dir{s}, \dir{y})$ by solving \eqref{eq:Schur-complement-system}, \eqref{compute-ds} and \eqref{compute-dy} respectively.
\item \emph{Check that the direction has a reasonable chance of being accepted.} If \eqref{eq:obj-could-improve} is not satisfied then terminate with $\status = \failure$.
\item Estimate the largest primal step size $\alpha^{\max}_{P}$ from equation~\eqref{fracBoundaryPrimalMax}.
\item \label{agg:line:back-track} \emph{Perform a backtracking linesearch}. Starting from $\alpha_{P} \gets \alpha^{\max}_{P}$ repeat the following steps.
\begin{enumerate}[label*=.{\arabic*}]
\item \label{line:agg-back-too-small} \emph{Check the step size is not too small.} If $\alpha_{P} \le \parMinStableStepSize$ then goto line~\ref{line:agg-terminate}.
\item Compute the trial primal variables $(\mu^{+}, x^{+}, s^{+})$ via~\eqref{eq:iterate-update}.
\item If the \fracBound{} rule \eqref{fracBoundary-primal} is not satisfied, then set $\alpha_{P} \gets \parBacktracking \alpha_{P}$ and go to line~\ref{line:agg-back-too-small}. 
\item Compute feasible dual step sizes $B( s^{+}, \dir{y} )$.
\item If $B( s^{+}, \dir{y} ) = \emptyset$, then set $\alpha_{P} \gets \parBacktracking \alpha_{P}$ and go to line~\ref{line:agg-back-too-small}.
\item Compute dual variable step size $\alpha_{D}$ using \eqref{eq:compute-alpha-D} and compute the trial dual variables $y^{+}$ using \eqref{eq:update-y}.
\item \label{line:filter} \emph{Sufficient progress on filter.} If neither equation~\eqref{eq:phi-sufficient-progress} or \eqref{eq:filter} is satisfied, then set $\alpha_{P} \gets \parBacktracking \alpha_{P}$ and go to line~\ref{line:agg-back-too-small}.
\item Terminate with $\status = \success$ and return the point $(\mu^{+}, x^{+}, s^{+}, y^{+})$.
\end{enumerate}
\item \label{line:agg-terminate} Terminate with $\status = \failure$.
\end{enumerate}
\caption{Practical stabilization step}\label{alg:stable}
\end{algorithm}

\subsection{Algorithm Parameters}

\begin{table}[H]
\begin{tabular}{ |c| p{7.5cm}|p{2.5cm}| p{2.7cm}| } 
 \hline
Parameter & Description & Possible values & Chosen value  \\ 
\hline
$\TOLopt$ & Tolerance for the optimality criterion \eqref{terminate-kkt}. & $\TOLoptInterval$ & $\TOLoptValue$  \\
\hline
$\TOLinfOne$ & Tolerance for the infeasibility criterion \eqref{terminate-primal-infeasible} corresponding to $\infeasFuncOne$. & $\TOLinfOneInterval$ & $\TOLinfOneValue$ \\
\hline
$\TOLinfTwo$ & Tolerance for the infeasibility criterion \eqref{terminate-primal-infeasible} corresponding to $\infeasFuncTwo$. & $\TOLinfTwoInterval$ & $\TOLinfTwoValue$ \\
\hline
$\TOLunbounded$ & Tolerance for the unboundedness criterion \eqref{terminate-dual-infeasible}. & $\TOLunboundedInterval$ & $\TOLunboundedValue$ \\
\hline
$\parConRegularizer$ & Used to modify the log barrier function to prevent primal iterates from diverging. & $\parConRegularizerInterval$ & $\parConRegularizerValue$ \\
\hline
  $\parComp$ & Restricts how far complementarity of $s$ and $y$ can be from $\mu$. See \eqref{eq:comp-slack}.  &$\parCompInterval$ & $\parCompValue$ \\ 
 \hline
   $\parCompAgg$ & Restricts how far complementarity of $s$ and $y$ can be from $\mu$ in order for the aggressive criterion to be met. See \eqref{agg-criteron-buffer}.  & $\parCompAggInterval$ & $\parCompAggValue$  \\ 
\hline
\hline
            $\parObjReductFactor$ & Acceptable reduction factor for the merit function $\phi_{\mu}$ during stabilization steps. See \eqref{eq:phi-sufficient-progress}.  & $\parObjReductFactorInterval$ & $\parObjReductFactorValue$  \\
    \hline
   $\parMinStableStepSize$ & Minimum step size for stable line searches. See \eqref{eq:min-step-size-stable}.  & $\parMinStableStepSizeInterval$ & $\parMinStableStepSizeValue$  \\ 
   \hline 
      $\parKKTReductFactor$ & Acceptable reduction factor for the scaled KKT error $\meritKKT_{\mu}$ during stabilization steps. See \eqref{eq:kkt-progress}.  & $\parKKTReductFactorInterval$ & $\parKKTReductFactorValue$ \\ 
      \hline
    $\parBacktracking$ & Backtracking factor for line searches in Algorithm~\ref{alg:aggressive} and \ref{alg:stable}. & $\parBacktrackingInterval$ & $\parBacktrackingValue$ \\
    \hline
$\parFracBoundaryExp$ & Exponent of $\| \dir{x} \|$ used in \fracBound{} formula \eqref{fracBoundaryPrimalMax} for computing the maximum step size $\alpha_{P}^{\max}$. &$\parFracBoundaryExpInterval$ & $\parFracBoundaryExpValue$ \\
\hline
 $\parAggProtect$ & Step size for which we check that the dual feasibility is decreasing (line~\ref{agg-protect-1} and \ref{agg-protect-2} of Algorithm~\ref{alg:aggressive}). & $\parAggProtectInterval$ & \parAggProtectValue \\
\hline
 $\parFracBoundary$ & Diagonal matrix with the \fracBound{} parameter for each constraint. See \eqref{fracBoundary-primal} and \eqref{fracBoundary-dual}. & Each element is in the interval $\parFracBoundaryInterval$. & $\parFracBoundaryValue$ for all elements \\ 
        \hline
$\parFracBoundaryMax$ & Diagonal matrix with the \fracBound{} parameter for each constraint used in \eqref{fracBoundaryPrimalMax} for computing $\alpha_{P}^{\max}$. & Each element is in the interval $\parFracBoundaryMaxInterval$. & $\parFracBoundaryMaxValueLinear$ and $\parFracBoundaryMaxValueNL$ for linear and nonlinear constraints respectively \\
\hline
$\parDeltaMin$ & Used in Algorithm~\ref{practical-one-phase-IPM} and \ref{alg:mat-fact}.  & $\parDeltaMinInterval$ & $\parDeltaMinValue$ \\
\hline
$\parDeltaDecrease$ & Used in Algorithm~\ref{practical-one-phase-IPM} and \ref{alg:mat-fact}. & $\parDeltaDecreaseInterval$ & $\parDeltaDecreaseValue$ \\
\hline
$\parDeltaIncreaseFailure$ & Used in Algorithm~\ref{practical-one-phase-IPM}  and \ref{alg:mat-fact}. & $\parDeltaIncreaseFailureInterval$ & $\parDeltaIncreaseFailureValue$ \\
\hline
$\parDeltaMax$ &Used in Algorithm~\ref{practical-one-phase-IPM} and \ref{alg:mat-fact}. & $\parDeltaMaxInterval$ & $\parDeltaMaxValue$ \\
\hline
$\parNumCor$ & Maximum number inner iterations per outer iteration. See \eqref{take-steps} of Algorithm~\ref{practical-one-phase-IPM}.  & $\mathbb{N}$ & $\parNumCorValue$  \\ 
 \hline
 $\parInitialize$  & Minimum slack variable value in the initialization (Section~\ref{sec:initialization}). & $\parInitializeInterval$ & $\parInitializeValue$ \\
\hline
$\parInitializeMin$  & Minimum dual variable value in the initialization. & $\parInitializeMinInterval$ & $\parInitializeMinValue$ \\
\hline
$\parInitializeMax$  & Maximum dual variable value in the initialization. & $\parInitializeMaxInterval$ & $\parInitializeMaxValue$ \\
\hline
 $\parMuScale$ & Scales the size of $\mu^0$ in the initialization.  & $\parMuScaleInterval$ & $\parMuScaleValue$ \\
 \hline
\end{tabular}
\caption{Parameters values and descriptions}
\end{table}

\newpage

\section{Numerical results}\label{sec:numerical-results}

 The numerical results are structured as follows. Section~\ref{alg:comparison-IPOPT} compares our algorithm against IPOPT on CUTEst. Section~\ref{sec:infeas} compares our algorithm and IPOPT on a set of infeasible problems. For a comparison of the practical behavior of the dual multipliers we refer the reader to \cite{haeser2017behavior}. The code for our implementation can be found at \url{https://github.com/ohinder/OnePhase} and tables of results at \url{https://github.com/ohinder/OnePhase.jl/tree/master/benchmark-tables}.
For description of individual CUTEst problems see \url{http://www.cuter.rl.ac.uk/Problems/mastsif.shtml}.

Overall, on the problems we tested, we found that our algorithm required significantly less iterations to detect infeasibility and failed less often. However, given both solvers found an optimal solution then IPOPT tended to require fewer iterations.

\subsection{Comparison with IPOPT on CUTEst}\label{alg:comparison-IPOPT}

To obtain numerical results we use the CUTEst nonlinear programming test set \cite{gould2015cutest}. We selected a subset from CUTEst with more than $100$ variables and $100$ constraints, but the total number of variables and constraints less than $10,000$. We further restricted the CUTEst problems to ones that are classified as having first and second derivatives defined everywhere (and available analytically). This gave us a test set with $238$ problems. For solving our linear systems we use Julia's default Cholesky factorization (SuiteSparse) and IPOPT's default---the MUMPs linear solver.

Table~\ref{avg:evaluations} compares the number of calls and runtime of different elements of our algorithm. We do not currently compare the number of function evaluations with IPOPT, but in general we would anticipate that IPOPT needs slightly fewer evaluations per outer iteration. In particular, our algorithm may make multiple objective, constraint and Jacobian evaluations per inner iteration. We evaluate the Lagrangian of the Hessian once per outer iteration. For problems where function evaluations are expensive relative to factorization, one is often better off using SQP methods rather than interior point methods. For example, it is known that SNOPT generally requires fewer function evaluations than IPOPT \cite[Figure 2, Figure 3]{gill2015performance}. 

\begin{table}[H]
\begin{tabular}{l  p{3.0cm} l}
&   Mean \# calls per outer iteration & \% runtime contribution \\ 
Hessian & 1 & 3.4\% \\  
Schur complement &1 & 42.1\% \\
Jacobian & 2.2 & 7.9\%  \\
Gradient & 2.2 & 0.4\%  \\
Constraints & 6.9 & 0.6\% \\
Factorizations & 1.9  & 35.4 \%  \\
Backsolves & 10.0 & 0.8\% 
\end{tabular}
\caption{Number of calls and runtime of different algorithm elements.}\label{avg:evaluations}
\end{table}

We consider algorithm to have failed on a problem if it did not return a status of infeasible, optimal or unbounded. Overall the number of failures is 39 for IPOPT and 21 for the one-phase algorithm. From Table~\ref{compare-outputs} we can see that the one-phase algorithm detects infeasibility and KKT points more often than IPOPT. If we reduce the termination tolerances of both algorithms to $10^{-2}$ and re-run them on the test set the one-phase algorithm fails $10$ times versus $41$ times for IPOPT (IPOPT's algorithm makes choices based on the termination criterion).

\begin{table}[H]
\caption{Comparison on problems where solver outputs are different.}\label{compare-outputs}
\begin{tabular}{| c | c | c |}
& IPOPT & one-phase \\
\hline
failure &  32 & 14 \\ 
infeasible & 3 & 11 \\  
unbounded & 0 & 0 \\
KKT & 18 & 28 
\end{tabular}
\end{table}

To run this test set it takes a total of 16 hours for IPOPT and 10 hours for our one-phase algorithm\footnote{Computations were performed on one core and 8GB of RAM Intel(R) Xeon(R) CPU E5-2650 v2 at 2.60GHz using Julia 0.5 with one thread.}. However, IPOPT is generally significantly faster than our algorithm. It has a median runtime of $0.6$ seconds per problem versus $3.3$ seconds for our algorithm (including problems where the algorithm fails). This is not surprising since our code is written in Julia and is not optimized for speed; IPOPT is written in Fortran and has been in development for over 15 years. This speed difference is particularly acute on small problems where the overheads of Julia are large. Assuming factorization or computation of the hessian is the dominant computational cost we would expect the time per iteration to be similar if our algorithm was efficiently implemented in a compiled programming language. For this reason, we compare the algorithms based on iteration counts.
 
To compare iteration counts our graphs use the performance profiling of \citet*{dolan2002benchmarking}. In particular, on the $x$-axis we plot:
$$
\frac{\text{iteration count of solver}}{\text{iteration count of fastest solver}}
$$
and the curve plotted is a cumulative distribution over the test set. We can see from Figure~\ref{fig:iter-count-CUTEst} that the overall distribution of iteration counts seems similar to IPOPT. Although given both algorithms declare the problem optimal IPOPT is more likely to require fewer iterations (Figure~\ref{fig:iter-count-CUTEst-opt}).

\begin{figure}[H]
\includegraphics[scale=0.5]{\figDir{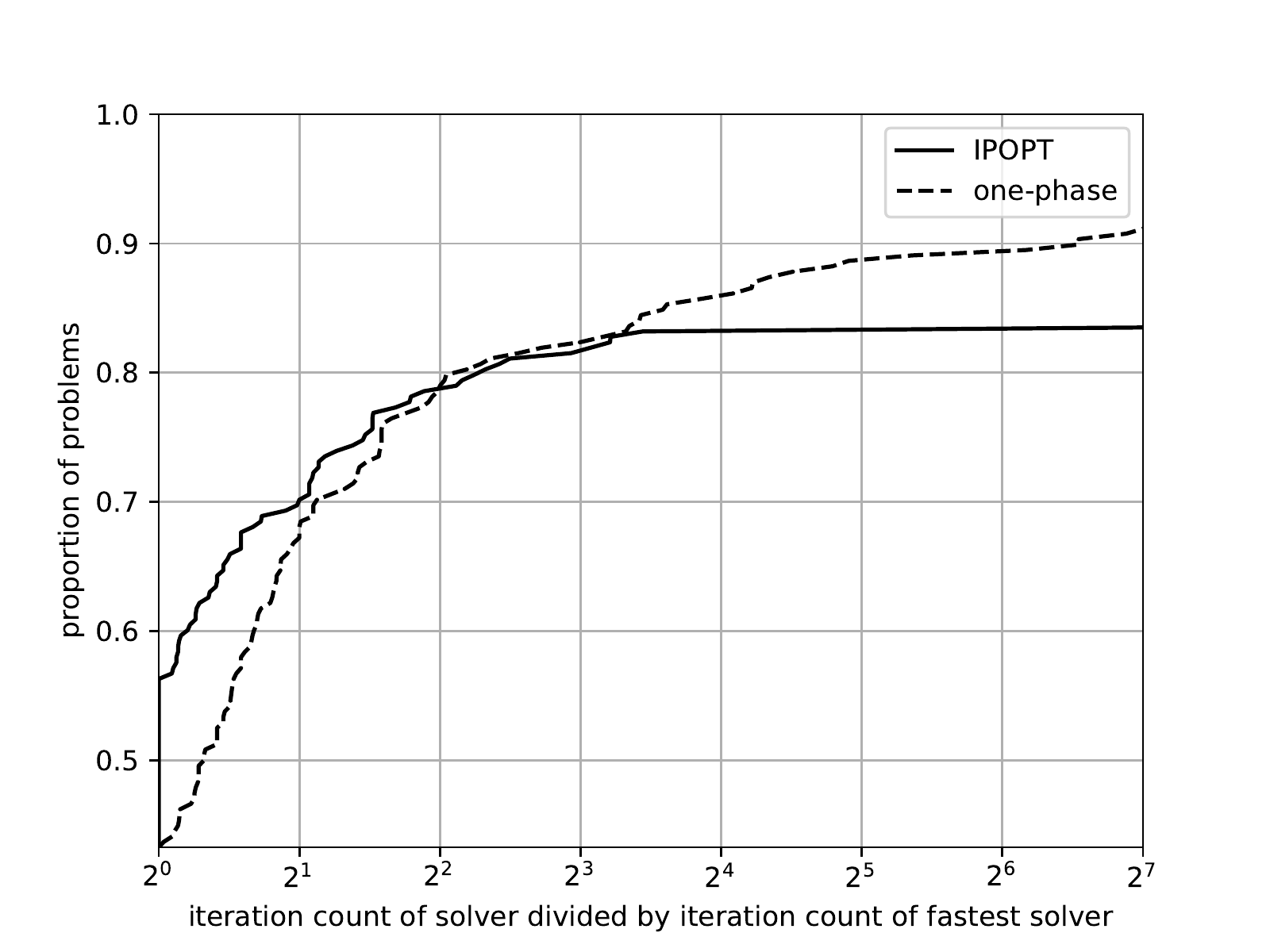}}
\caption{Performance profile on problems until algorithm succeeds (i.e., returns a status of optimal, primal infeasible, or unbounded)}\label{fig:iter-count-CUTEst}
\end{figure}

\begin{figure}[H]
\includegraphics[scale=0.5]{\figDir{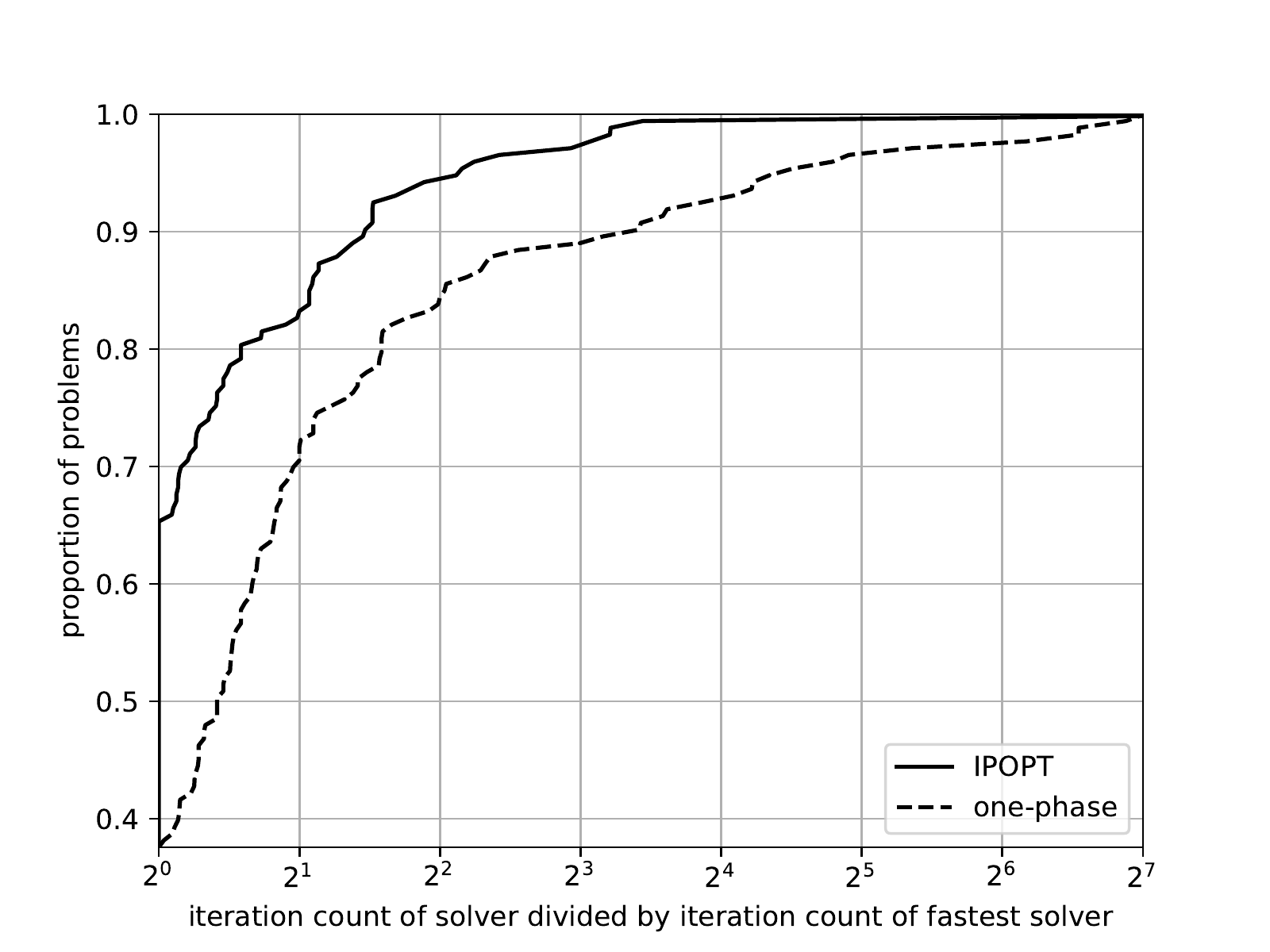}}
\caption{Performance profile on problems where both algorithms declare problem optimal.}\label{fig:iter-count-CUTEst-opt}
\end{figure}

\subsection{Comparison with IPOPT on infeasible problems}\label{sec:infeas}

On few of the CUTEst problems both solvers declare the problems infeasible. Therefore understand how the solvers perform on infeasible problems we perturb the CUTEst test set. Recall that CUTEst writes problems in the form $l \le c(x) \le u$. To generate a test set that was more likely to contain infeasible problems we shift the constraints (excluding the variable bounds) as follows:
$$
\tilde{c}(x) \gets c(x) + e,
$$
and then input the problems to the one-phase solver and IPOPT. Note that the new problems may or may not be feasible. In Table~\ref{table-perturbed-solver-output} we compare the solver outputs on this perturbed test set. 

\begin{table}[H]
\caption{Comparison on perturbed CUTEst where solver outputs are different.}\label{table-perturbed-solver-output}
\begin{tabular}{| c | c | c |}
& IPOPT & one-phase \\
\hline
failure &  42 & 18 \\ 
infeasible & 14 & 26 \\  
unbounded & 0 & 0 \\
KKT & 5 & 17 
\end{tabular}
\end{table}

Overall we found that $94$ of the $238$ problems were declared infeasible by both solvers. On this test set we compared the iteration counts of IPOPT and the one-phase algorithm (Figure~\ref{fig:inf_cutest_iter_ratios}). 

\begin{figure}[H]
\includegraphics[scale=0.5]{\figDir{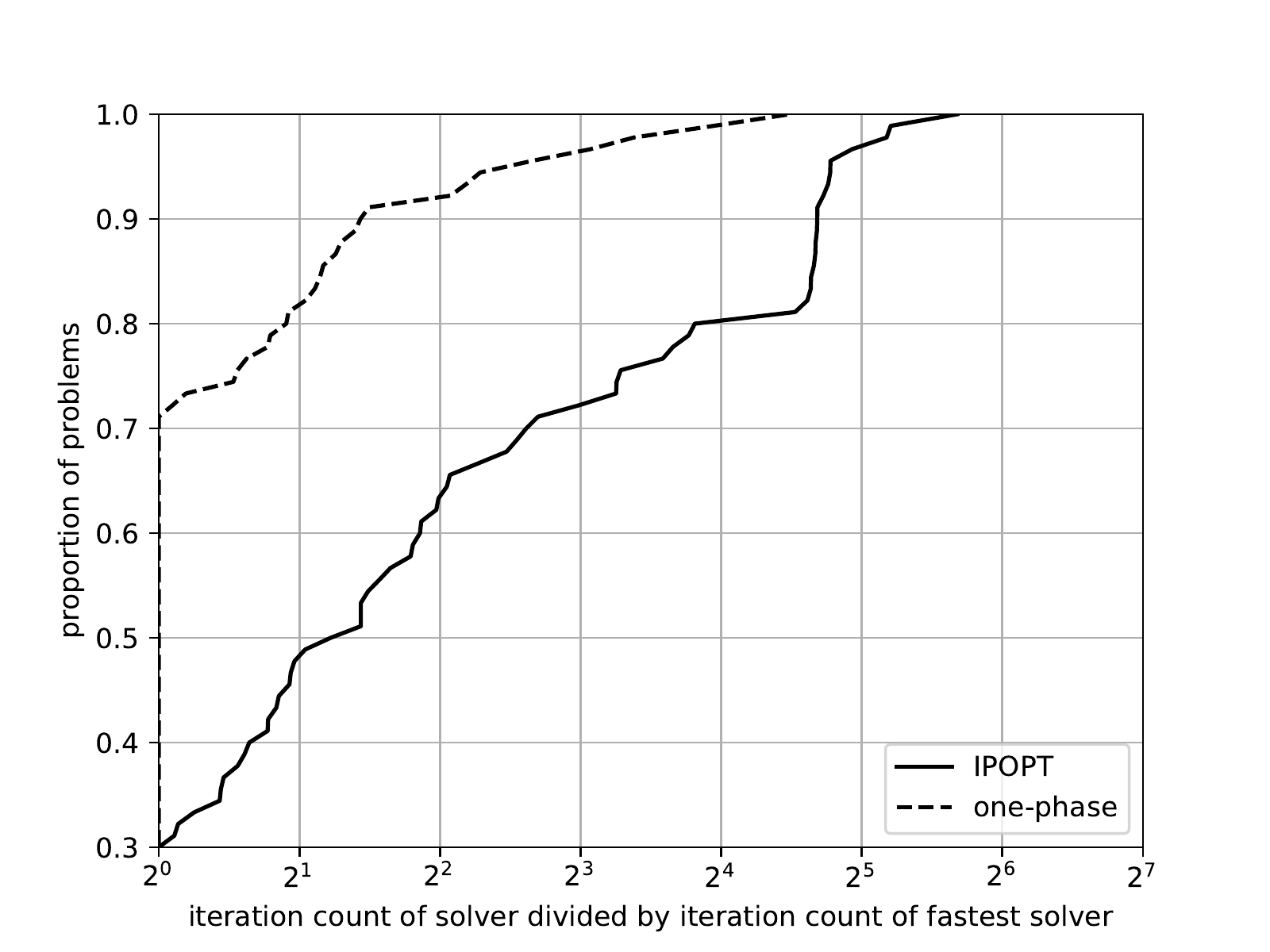}}
\caption{Performance profile on perturbed CUTEst problems where both algorithms declare the problem infeasible.}\label{fig:inf_cutest_iter_ratios}
\end{figure}

Since this perturbed CUTEst test set was somewhat artificial we also tested on the NETLIB test set containing infeasible linear programs\footnote{The LP problem CPLEX2 is an almost feasible problem that was declared feasible by both solvers. Therefore we removed it from our tests.} (Figure~\ref{fig:inf_netlib_iter_ratios}). We remark that on these 28 problems the one-phase algorithm required less iterations on every problem.

\begin{figure}[H]
\includegraphics[scale=0.5]{\figDir{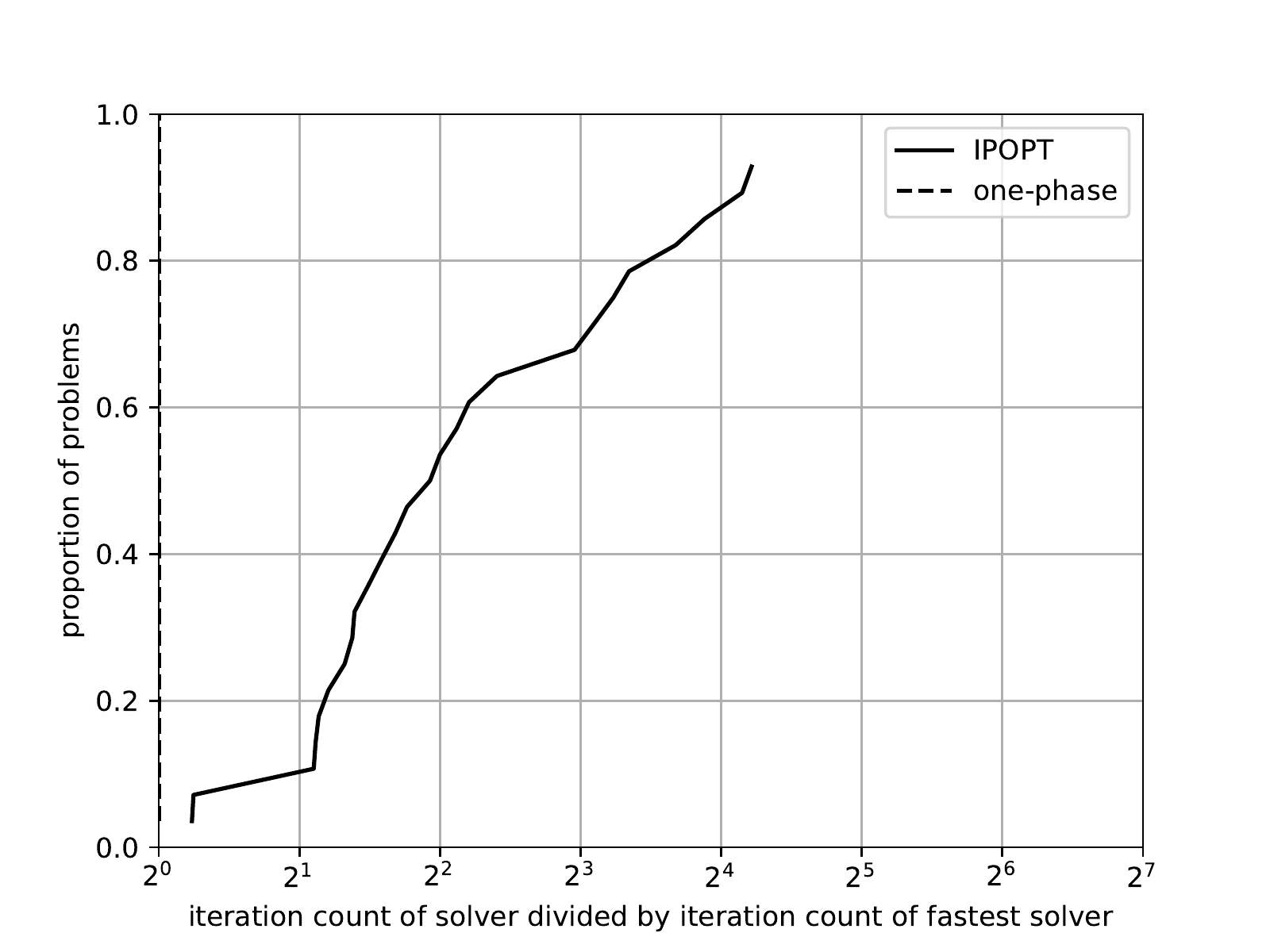}}
\caption{Performance profile on NETLIB infeasible test set.}\label{fig:inf_netlib_iter_ratios}
\end{figure}

\section{Conclusion and avenues for improvement}

This paper proposed a one-phase algorithm. It avoids a two phase or penalty method typically used in IPMs for nonlinear programming. Nonetheless, under mild assumptions it is guaranteed to converge to a first-order certificate of infeasibility, unboundedness or optimality. As we have demonstrated on large-scale test problems the algorithm has similar iteration counts to IPOPT, but significantly better performance on infeasible problems. 

An additional benefit of our approach is the ability to choose $\conWeight$. For example, if one has a starting point $x^0$ that strictly satisfies a subset of the constraints one can initialize the algorithm with $\conWeight_i = 0$ on this subset (we do this automatically for the bound constraints). The algorithm will then satisfy these constraints for all subsequent iterations. Aside from potentially speeding up convergence, this is particularly beneficial if some of the constraints or objective are undefined outside the region defined by this subset of the constraints.

One can interpret the relative value of $\mu^0$ and $\conWeight$ as the extent to which feasibility is prioritized over optimality: a larger $\mu^0$ gives more priority to feasibility. For a fixed $\conWeight$ picking a huge $\mu$ makes the IPM method behave like a phase-one, phase-two method: initially the algorithm attempts to find a feasible solution then it minimizes the objective on the feasible region. We have attempted such an initialization strategy and note that while it performs well on many problems, in others it causes the dual multipliers to become unduly large (as the theory of \cite{haeser2017behavior} predicts when, for example, the feasible region lacks an interior). We find that manually tuning the value of $\mu^0$ for a specific problem often significantly reduces the number of iterations. This sensitivity to the initialization, especially compared with the homogenous self-dual, is a known issue for Lustig's IPM for linear programming \cite[Table 1]{meszaros2015practical}. Therefore, we believe that improving our initialization scheme (Section~\ref{sec:initialization}) could result in significant improvements.

Finally, we note that improving the accuracy of the linear system solves would also improve our robustness. A disadvantage of using the Cholesky factorization is that we often had difficulty obtaining a sufficiently accurate solution as we approached optimality. Potentially switching to an \LBL{} factorization \cite{amestoy1998mumps,bunch1971direct} might help resolve this issue.

\section*{Acknowledgements}

We would like to thank Michael Saunders, Ron Estrin and Toshihiro Kosaki for useful conversations and feedback on the paper.

\bibliographystyle{abbrvnat} %
\bibliography{library-one-phase-2.bib}

\appendix

\section{Global convergence proofs for Algorithm~\ref{simple-one-phase}}\label{app:global-conv}

The purpose of this section is to provide proofs of supporting results for Theorem~\ref{thm:global-convergence}.

\subsection{Convergence of aggressive steps}

\subsubsection{Proof of Lemma~\ref{lemma:agg-succeeds}}\label{sec:lemma:agg-succeeds}

\lemAggSucceeds*
\begin{proof}
First, observe that as $\delta \rightarrow \infty$ the direction $\dir{x}$ computed from \eqref{eq:Schur-complement-system} tends to zero. Consider any $\alpha_{P} \in (0,1)$, since the function $a$ is continuous for sufficiently large $\delta$ we have
\begin{flalign}\label{eq:a-bound}
\| \cons(x) - \cons(x + \alpha_{P} \dir{x}) \|_{\infty} \le  \frac{\parCompAgg - \parComp}{2 \parCompAgg} \min_i\{ s_{i} \}
\end{flalign}
Now, set $\alpha_{P} = \minStepFunc(\mu, s)$ where $\minStepFunc$ is defined in \eqref{simple-min-step-size-aggresssive}, then for this choice of $\alpha_{P}$ we have
$$
\abs{ s^{+}_i - s_i } = \abs{  -\alpha_{P} \mu \conWeight_i +  \cons_i(x) - \cons_i(x + \alpha_{P} \dir{x}) } \le \frac{\parCompAgg - \parComp}{\parCompAgg} s_i,
$$
where the first equality holds by applying \eqref{eq:slackVarUpdate} and then \eqref{eq:muVarUpdate} with $\gamma = 0$, the second inequality equation \eqref{eq:a-bound} and $\alpha_{P} = \minStepFunc(\mu, s)$.
Note that 
$$
\frac{s^{+}_i y_i}{\mu} = \frac{s_i y_i}{\mu} (e + s^{-1}_i (s^{+}_i - s_i)) \in  \frac{s_i y_i}{\mu} \left[  \frac{\parComp}{\parCompAgg}, 1 + \frac{\parCompAgg - \parComp}{\parCompAgg} \right]  \subseteq \frac{s_i y_i}{\mu} \left[  \frac{\parComp}{\parCompAgg}, \frac{\parCompAgg}{\parComp} \right]  \subseteq [\parComp, 1/\parComp ]
$$
where the first transition comes from algebraic manipulation, the second transition follows from substituting our bound on $\abs{s^{+}_i - s_i}$, the third transition using $\frac{\parCompAgg - \parComp}{\parCompAgg} < \frac{\parCompAgg - \parComp}{\parComp}$, the final transition uses $ \frac{s_i y_i}{\mu} \in [\parCompAgg, 1/\parCompAgg]$. Therefore $\alpha_{D} = 0$ gives a feasible dual iterate. We conclude there exists a $\delta$ such that the $\alpha_{P}$ chosen by line~\ref{simple-agg-select-alpha-P} is at least $\minStepFunc(\mu, s)$, which proves the result.
\end{proof}

\subsubsection{Proof of Lemma~\ref{lem:yw-bounded}}\label{sub:lem:yw-bounded}

\lemYWbounded*

\begin{proof}
Since \eqref{terminate-primal-infeasible} does not hold: either $a(x)^T y \le 0$,
$\infeasFuncOne (\mu,x,s,y) > \TOLinfOne$ or $\infeasFuncTwo (\mu,x,s,y) > \TOLinfTwo$. We consider these three cases in order.

Let us consider the case that $a(x)^T y \le 0$ then $(\mu \conWeight - s)^T y = a(x)^T y \le 0$ by $\cons(\vec{x}) + \vec{s} = \mu \conWeight$. Re-arranging $(\mu \conWeight - s)^T y \le 0$ gives $y^T \conWeight \le s^T y / \mu \le m / \parComp$.

Let us consider the case that $\TOLinfOne < \infeasFuncOne (\mu,x,s,y) = \frac{\| \grad  a(x)^T y \|_{1}}{ a(x)^T y }$ then
\begin{flalign*}
w^T y &< s^T y + \frac{\| \grad  a(x)^T y \|_{1}}{ \TOLinfOne  } \le s^T y + \frac{\| \grad f(x) - \parConRegularizer \mu e^T \grad a(x) \|_{1} + \| \grad \Lag_{\mu}(x,y) \|_{1}}{ \TOLinfOne  } \\
&\le s^T y + 2 \frac{\| \grad f(x) - \parConRegularizer \mu e^T \grad a(x) \|_{1}}{ \TOLinfOne  } 
\end{flalign*}
where the first inequality holds by $\cons(\vec{x}) + \vec{s} = \mu \conWeight$ and re-arranging, the second by the triangle inequality and the third by the assumption that the aggressive step criteron \eqref{agg-criteron-farkas} is met. Furthermore, the term $s^T y$ is bounded by $\mu / \parCompAgg$ and the term $\| \grad f(x) - \parConRegularizer \mu e^T \grad a(x) \|_{1}$ is bounded because $f$ and $a$ are twice differentiable and the unboundness criterion~\eqref{terminate-dual-infeasible} is not met.

Let us consider the case that $\TOLinfTwo < \infeasFuncTwo (x,s,y)  = \frac{\| \grad  a(x)^T y \|_{1} + s^T y}{ \| y \|_{1} }$ then
\begin{flalign*}
\| y \|_{1} &<  \frac{s^T y + \| \grad  a(x)^T y \|_{1}}{\TOLinfTwo} \le \frac{ s^T y + \| \grad f(x) - \parConRegularizer \mu e^T \grad a(x) \|_{1} + \| \grad \Lag_{\mu}(x,y) \|_{1}}{\TOLinfTwo} \\
&\le 2 \frac{ s^T y + \| \grad f(x) - \parConRegularizer \mu e^T \grad a(x) \|_{1}}{\TOLinfTwo}
\end{flalign*}

where the first inequality holds by re-arranging, the second by the triangle inequality and the third by the assumption that the aggressive step criteron \eqref{agg-criteron-farkas} is met.
We conclude $\| y \|_{1}$ is bounded since $\| x \|$ is bounded, clearly therefore $w^T y$ is also bounded above.

Since in all three cases $\conWeight^T y$ is bounded we conclude the proof.
\end{proof}

\subsection{Convergence results for stabilization steps}

\subsubsection{Proof of Lemma~\ref{lem:compact-Q} and Corollary~\ref{coro:bound-everything}} \label{sec:lem:compact-Q}

We now introduce the set $\mathbb{Q}_{\mu, C}$ which we will use to represent the set of possible points the iterates of Algorithm~\ref{simple-one-phase} can take for a fixed $\mu$, i.e., during consecutive stabilization steps.

\begin{definition}\label{defQ}
Define the set $\mathbb{Q}_{\mu, C}$ for constants $\mu, C > 0$ as the set of points $(x,y,s) \in  \R^{\nvar} \times \R_{++}^{\ncon} \times \R_{++}^{\ncon}$ such that \eqref{restate:eq:barrier-primal-sequence-nice} holds and
\begin{enumerate}
\item \label{Q-phi-bounded-above} The function $\phi_{\mu}$ is bounded above, i.e., $\phi_{\mu}(x,y,s) \le C$.
\item \label{Q-bounded-below} The primal iterates are bounded, i.e., $\| x \| \le C$. 
\end{enumerate}
\end{definition}

Suppose Algorithm~\ref{simple-one-phase} generates consecutive stabilization steps stabilization steps $(\mu^k, x^k, s^k, y^k)$ for $k \in \{ \kStart, \dots, \kEnd \}$ with $\mu =\mu^{\kStart} = \dots = \mu^{\kEnd}$ and none of these iterates satisfy the unboundedness termination criterion~\eqref{terminate-dual-infeasible}. Let us show these iterates are contained in $\mathbb{Q}_{\mu, C}$, i.e., $(x^{k}, s^{k}, y^{k}) \in \mathbb{Q}_{\mu, C}$ for some $C > 0$. For $C \ge  \phi_{\mu}(x^{\kStart}, s^{\kStart}, y^{\kStart})$ condition \ref{defQ}.\ref{Q-phi-bounded-above} holds since during stabilization steps we only accept steps that decrease $\phi_{\mu}$. For sufficiently large $C$, condition \ref{defQ}.\ref{Q-bounded-below} holds from the assumption the unboundedness termination criterion~\eqref{terminate-dual-infeasible} is not satisfied.

We are now ready to prove Lemma~\ref{lem:compact-Q}. Note that during Lemma~\ref{lem:compact-Q} we will repeatedly use that the following elementary real analysis fact: 

\begin{fact}
Let $X = \{ x : g_i(x) \le 0 \}$. If $g_i$ is a continuous function and the set $X$ is bounded, then the set $X$ is compact.
\end{fact}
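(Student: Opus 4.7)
The plan is to invoke the Heine--Borel theorem, which in $\R^n$ characterizes compactness as the conjunction of closedness and boundedness. Boundedness of $X$ is assumed in the hypothesis, so the only work is to show that $X$ is closed.

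First I would observe that for each index $i$, the sublevel set
$$
X_i := \{ x : g_i(x) \le 0 \} = g_i^{-1}\bigl( (-\infty, 0] \bigr)
$$
is the preimage of the closed set $(-\infty, 0] \subseteq \R$ under the continuous function $g_i$. Since the preimage of a closed set under a continuous function is closed, each $X_i$ is closed. Next I would write $X = \bigcap_i X_i$ and use that an arbitrary intersection of closed sets is closed; this step is valid regardless of whether the index set for $i$ is finite or infinite. Combining the resulting closedness of $X$ with the hypothesized boundedness, Heine--Borel immediately yields compactness.

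There is essentially no substantive obstacle here, so I would keep the write-up to a few lines. The only minor point worth flagging is notational: the statement uses the subscript $i$, which I would interpret as meaning $X$ is cut out by a (possibly finite, possibly infinite) family of continuous inequalities $g_i(x) \le 0$; the argument above covers both cases uniformly. If the intended reading is a single constraint, the proof collapses to the single observation that $g^{-1}\bigl((-\infty,0]\bigr)$ is closed by continuity of $g$.
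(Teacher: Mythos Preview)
Your proof is correct. The paper does not actually provide a proof of this statement; it simply labels it an ``elementary real analysis fact'' and invokes it without justification, so there is nothing to compare your argument against.
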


\begin{restatable}{lemma}{lemCompactQ}\label{lem:compact-Q}
Suppose assumptions~\ref{assume:diff} and \ref{assume:parameters} hold. 
For any constants $C, \mu > 0$ the set $\mathbb{Q}_{\mu, C}$ is compact.
\end{restatable}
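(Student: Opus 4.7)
The plan is to establish compactness in $\R^{\nvar + 2\ncon}$ by showing $\mathbb{Q}_{\mu,C}$ is both bounded and closed. Boundedness of $x$ is immediate from condition \ref{defQ}.\ref{Q-bounded-below}, and the primal feasibility equation $\cons(x)+s = \mu\conWeight$ together with continuity of $\cons$ on the compact set $\{x : \|x\|\le C\}$ yields an upper bound on each $s_i$. The heart of the argument is obtaining a \emph{strictly positive} lower bound on the $s_i$.

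For that lower bound, I would use the definition $\phi_{\mu}(x,s,y) = \barrier_{\mu}(x) + \|Sy-\mu\ones\|_{\infty}^3/\mu^2$, which since the complementarity term is nonnegative gives $\barrier_{\mu}(x) \le C$. Substituting $\mu\conWeight_i - \cons_i(x) = s_i$ from primal feasibility into the definition of $\barrier_{\mu}$ yields
\[
\obj(x) - \mu \sum_i \parConRegularizer \cons_i(x) - \mu \sum_i \log s_i \le C.
\]
Since $\obj$, $\cons$ are continuous and $\|x\|\le C$, the first two terms are uniformly bounded, so $\sum_i \log s_i$ is bounded below by some constant $M$. Combined with the upper bound on each $s_i$, this forces a uniform strictly positive lower bound $s_i \ge \underline{s} > 0$. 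Finally, the complementarity condition $s_iy_i/\mu \in [\parComp,1/\parComp]$ together with the two-sided bounds on $s_i$ immediately pins $y_i$ inside a compact subinterval of $\R_{++}$, giving overall boundedness.

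For closedness, I would observe that the defining conditions of $\mathbb{Q}_{\mu,C}$ -- namely $\cons(x)+s = \mu\conWeight$, $s_iy_i/\mu \in [\parComp, 1/\parComp]$, $\phi_{\mu}(x,s,y) \le C$, and $\|x\|\le C$ -- are each closed conditions on the open domain $\R^{\nvar} \times \R_{++}^{\ncon} \times \R_{++}^{\ncon}$, since $\cons$, $f$, and $\log$ are continuous on this domain (Assumption~\ref{assume:diff}). A limit point in $\R^{\nvar} \times \R^{\ncon} \times \R^{\ncon}$ of a sequence from $\mathbb{Q}_{\mu,C}$ could only escape the domain if some $s_i \to 0^+$ or $y_i \to 0^+$; but the uniform positive lower bounds derived above rule this out. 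Hence the set is closed in $\R^{\nvar+2\ncon}$, and by Heine--Borel it is compact.

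The only nontrivial step is the lower bound on $s_i$, and the main thing to watch is that the linear corrective term $-\mu\parConRegularizer\sum_i \cons_i(x)$ inside $\barrier_{\mu}$ does not spoil the bound; this is exactly why $\cons$ being continuous on the compact set $\{\|x\|\le C\}$ is used. Everything else is routine continuity and finite-dimensional topology.
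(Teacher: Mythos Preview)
Your proposal is correct and follows essentially the same approach as the paper: both arguments hinge on deducing from $\barrier_{\mu}(x)\le\phi_{\mu}(x,s,y)\le C$ (nonnegativity of the complementarity term) together with $\|x\|\le C$ that the slacks $s_i=\mu\conWeight_i-\cons_i(x)$ are uniformly bounded away from zero, then using the complementarity box $[\parComp,1/\parComp]$ to bound $y$, and finally invoking continuity for closedness. The only organizational difference is that the paper first shows the projection onto $x$-space is compact and then extends to $(x,s,y)$, whereas you work directly in the full space; your treatment of the per-coordinate lower bound on $s_i$ (via $\sum_i\log s_i\ge M$ combined with the per-coordinate upper bound) is in fact more explicit than the paper's.
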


\begin{proof}
First consider the set
$$
Q := \left\{ x \in \R^{\nvar} : (y, s) \in \R_{++}^{\ncon} \times \R_{++}^{\ncon}, \phi_{\mu}(x,y,s) \le C, \| x \| \le C \right\} 
$$
By $\| x \| \le C$ we see $Q$ is bounded. Furthermore, since $\phi_{\mu}(x) \le C$ and $Q$ is bounded there exists some constant $K_{1} > 0$ such that
$$
\mu w - \cons(x) \ge K_{1}
$$
for all $x \in Q$. Consider some sequence $x^{k} \in Q$ with $x^{k} \rightarrow x^{*}$. The statement $\cons(x) \le \mu w - K_{1}$ implies $\phi_{\mu}$ is continuous in a neighborhood of $x^{*}$. Using the definition of $Q$ and the assumption that $f$ and $a$ are continuous implies $x^{*} \in Q$, i.e., $Q$ is compact.

Note that:
$$
\mathbb{Q}_{\mu, C} = \left\{ (x,y,s) \in \R^{\nvar} \times \R_{++}^{\ncon} \times \R_{++}^{\ncon} : x \in Q, \cons(x) + s = \mu \conWeight, \frac{s_i y_i}{\mu} \in [\parComp, 1 / \parComp] ~~ \forall i \in \{1 , \dots, \ncon \} \right\}.
$$
Consider some $(x,y,s) \in \mathbb{Q}_{\mu, C}$, since $s = \mu w - \cons(x) \ge K_{2}$ and $\frac{s_i y_i}{\mu} \in [\parComp, 1 / \parComp]$ we can deduce $y$ is bounded. Since the function $\cons(x)$ and the term $s_i y_i$ are continuous we conclude $\mathbb{Q}_{\mu, C}$ is compact.
\end{proof}

\begin{restatable}{corollary}{coroBoundEverything}\label{coro:bound-everything}
Suppose assumptions~\ref{assume:diff} and \ref{assume:parameters} hold. Consider some fixed $\mu, C > 0$.
Then there exists some $L > 0$ such that for all $(x, s, y) \in \mathbb{Q}_{\mu, C}$ the following inequalities hold:
$$
s_i, y_i \ge 1/L
$$
$$
\| x \|, \| y \|, \| s \|, \| \grad \barrier_{\mu}(x) \|, \| \Schur \|, \| \grad \cons(x) \| \le L
$$
and for any $u \in \R^{\nvar}$ s.t. $\| u \| < 1 / L$
\begin{subequations}\label{lipschitz-continuous}
\begin{flalign}
\barrier_{\mu}(x + u) &\le \barrier_{\mu}(x) + \grad \barrier_{\mu}(x)^T u + L / 2 \| u \|^2 \label{phi-lipschitz-continuous} \\
\| \cons(x) + \grad \cons(x) u -\cons(x + u)  \| &\le L  \| u \|^2. \label{a-lipschitz-continuous-2nd}
\end{flalign}
\end{subequations}

Furthermore, if the aggressive criterion~\eqref{agg-criteron} does not holds then
$$
\max\{ \| \grad \barrier_{\mu}(x) \|, \| S y - \mu \ones \|_{\infty} \} \ge 1 / L.
$$
\end{restatable}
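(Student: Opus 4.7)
The plan is to invoke the compactness of $\mathbb{Q}_{\mu,C}$ established in Lemma~\ref{lem:compact-Q}, and rely on the fact that on a compact set every continuous function attains its bounds while every strictly positive continuous function is bounded away from zero. First I collect upper bounds: since $\|x\|\le C$ and $f,a$ are twice differentiable, $\|s\|$, $\|y\|$, $\|\grad a(x)\|$, and $\|\grad^2 a(x)\|$ are all uniformly bounded on $\mathbb{Q}_{\mu,C}$. Next I would derive lower bounds on $s_i,y_i$: $s_i = \mu w_i - a_i(x)$ is continuous and strictly positive on $\mathbb{Q}_{\mu,C}$, hence its infimum is attained and positive, and then $y_i \ge \parComp\mu/\max_j s_j$ by \eqref{restate:eq:comp-slack} (symmetrically for $s_i$). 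These lower bounds give a uniform upper bound on $\|S^{-1}\|$, which combined with \eqref{eq:Schur-matrix} and the explicit formula $\grad\barrier_\mu(x) = \grad f(x) - \mu\parConRegularizer\grad a(x)^T\ones + \mu\grad a(x)^T S^{-1}\ones$ gives uniform bounds on $\|\Schur\|$ and $\|\grad\barrier_\mu(x)\|$.

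For the Lipschitz-type inequalities \eqref{phi-lipschitz-continuous} and \eqref{a-lipschitz-continuous-2nd}, the uniform lower bound on each $s_i$ implies the existence of a uniform $r>0$ such that for every $x$ in the $x$-projection of $\mathbb{Q}_{\mu,C}$ and every $\|u\|\le r$, the segment $\{x+tu : t\in[0,1]\}$ lies inside a slightly enlarged compact set on which both $\grad^2\barrier_\mu$ and $\grad^2 a$ remain continuous and hence uniformly bounded. The standard descent lemma then yields \eqref{phi-lipschitz-continuous} and the second-order Taylor remainder bound for $a$ yields \eqref{a-lipschitz-continuous-2nd}, with the hypothesis $\|u\|<1/L$ absorbed by choosing $L$ large enough that $1/L<r$.

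For the final statement I would argue by contrapositive: if $L$ is chosen large enough, then $\max\{\|\grad\barrier_\mu(x)\|,\|Sy-\mu\ones\|_\infty\}<1/L$ forces all three aggressive step conditions \eqref{agg-criteron-opt}--\eqref{agg-criteron-buffer} to hold. The key identity, obtained by direct computation from the definitions of $\Lag_\mu$ and $\barrier_\mu$, is
\begin{equation*}
\grad\Lag_\mu(x,y) \;=\; \grad\barrier_\mu(x) \;+\; \grad a(x)^T S^{-1}(Sy-\mu\ones),
\end{equation*}
so the uniform bounds on $\|\grad a\|$ and $\|S^{-1}\|$ make $\|\grad\Lag_\mu(x,y)\|$, in either the $\ell_1$ or $\ell_\infty$ norm, as small as we like; together with $\sigma(y)\le 1$ this gives \eqref{agg-criteron-opt}. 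Smallness of $\|Sy-\mu\ones\|_\infty$ places $s_iy_i/\mu$ inside $[\parCompAgg,1/\parCompAgg]$ since $\parCompAgg<1$, yielding \eqref{agg-criteron-buffer}. For \eqref{agg-criteron-farkas} the right-hand side contains $s^Ty\ge m\parComp\mu>0$, a strictly positive constant independent of the iterate, while the left-hand side is $O(1/L)$, so the inequality holds for $L$ large.

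The main obstacle, and essentially the only non-mechanical step, is verifying \eqref{agg-criteron-farkas}: one must recognize that its right-hand side cannot shrink to zero, thanks to the uniform lower bound $s^Ty\ge m\parComp\mu$ inherited from \eqref{restate:eq:comp-slack}, so that smallness of $\|\grad\Lag_\mu\|_1$ genuinely certifies the inequality rather than leaving it vacuous.
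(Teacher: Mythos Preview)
Your proposal is correct and follows essentially the same route as the paper: invoke compactness of $\mathbb{Q}_{\mu,C}$ from Lemma~\ref{lem:compact-Q}, use continuity to obtain uniform upper and lower bounds, and pass to a slightly enlarged compact neighborhood to bound the Hessians for the Lipschitz estimates \eqref{lipschitz-continuous}. Your treatment of the final claim---via the identity $\grad\Lag_\mu(x,y)=\grad\barrier_\mu(x)+\grad a(x)^T S^{-1}(Sy-\mu e)$ and the uniform lower bound $s^Ty\ge m\parComp\mu$---is considerably more explicit than the paper's, which simply attributes all the claims (including this one) to the compactness principle and spells out only the Lipschitz part.
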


\begin{proof}
All these claims use Lemma~\ref{lem:compact-Q} and the elementary real analysis fact that for any continuous function $g$ on a compact set there $X$ there exists some $x^{*} \in X$ such that $g(x^{*}) = \sup_{x \in X}{g(x)}$.

The only non-trivial claim is showing \eqref{lipschitz-continuous}, which we proceed to show. Since there exists some constants $\varepsilon_{1} > 0$ such that $(x,y,s) \in \mathbb{Q}_{\mu, C}$ we have $\cons(x) \le \mu w - \varepsilon_{1}$. It follows that there exists some constant $\varepsilon_2 > 0$ such that for all $\| u \| \le \varepsilon_2$ we have $\cons(x + u) < \mu w$. Therefore there exists some $L > 0$ such that $\| \grad^2 \barrier_{\mu}(x) \| \le L$.

For some $x$ and $\nu$ with $\| \nu \| = 1$ define the one dimensional function
$$
h(\alpha) :=  \barrier_{\mu}(x + \alpha \nu) 
$$
then for $\alpha \in [0, \varepsilon_2]$ we get
$$
h(\alpha) - h(0) - \alpha h'(0) = \int_{0}^{\alpha}{ \int_{0}^{\eta_{2}}{h''(\eta) \partial \eta_{1} \partial \eta_{2}} } \le \alpha^2 L / 2,
$$
which using $u = \nu \alpha$ for $\alpha \in [0, \varepsilon_2]$ concludes the proof of \eqref{phi-lipschitz-continuous}. Showing \eqref{a-lipschitz-continuous-2nd} consists of a similar argument.
\end{proof}

\subsubsection{Proof of Lemma~\ref{lemConsecutiveStable}}\label{sec:lemConsecutiveStable}

Before starting the proof of  Lemma~\ref{lemConsecutiveStable}, we remark that Lemma~\ref{lemConsecutiveStable} uses Lemma~\ref{lem:compact-Q} and Corollary~\ref{coro:bound-everything} which are proved in Section~\ref{sec:lem:compact-Q}

\lemConsecutiveStable*

\begin{proof}
Recall that, as we discussed following Definition~\ref{defQ}, there exists some $C > 0$ such that for any consecutive series of stabilization steps $(\mu^k, x^k, s^k, y^k)$ for $k \in \{ \kStart, \dots, \kEnd \}$ we have $(x^k, s^k, y^k) \in \mathbb{Q}_{\mu, C}$ with $\mu = \mu_{k_{\kStart}} = \mu_{k_{\kEnd}}$.

From Corollary~\ref{coro:bound-everything} we know that there exists some $L > 0$ such that $\max\{-\lambda_{\min}(\Schur), \lambda_{\max}(\Schur) \} \le L$, therefore 
\begin{flalign*}
\dir{x}^T \grad \psi(x) = -\grad \psi(x)^T (\Schur + \delta I)^{-1} \grad \psi(x) \le -\| \grad \psi(x) \|^2 / (L + \delta) \le -\| \grad \psi(x) \|^2 / (3 L + \mu),
\end{flalign*}
where the first transition uses the definition of $\dir{x}$ in \eqref{eq:Schur-complement-system} with $\gamma = 1$ when computing $b$, the second transition $\lambda_{\max}(\Schur) \le L$ and the third transition uses $\delta \le 2 L + \mu$ from line~\ref{alg-simple-delta-min} of Algorithm~\ref{simple-one-phase}.

Similarly, by line~\ref{alg-simple-delta-min} of Algorithm~\ref{simple-one-phase} we get $\Schur + \delta \eye \succeq \mu \eye$ therefore
\begin{flalign*}
\| \dir{x} \|^2 = \| (\Schur + \delta I)^{-1} \grad \psi(x)\|^2 \le \| \grad \psi(x)\|^2 / \mu^2.
\end{flalign*}

Furthermore,
\begin{flalign*}
\barrier_{\mu}(x^{+}) - \barrier_{\mu}(x)  &\le \alpha_{P} \grad \barrier_{\mu}(x)^T \dir{x} + \frac{L \alpha_{P}^2}{2} \| \dir{x} \|^2 \\
&= \alpha_{P} \grad \barrier_{\mu}(x)^T \dir{x} + \left( \frac{L \alpha_{P}^2}{2} + \parObjReductFactor \frac{\alpha_{P}  \delta}{2} \right) \| \dir{x} \|^2  - \parObjReductFactor  \frac{\alpha_{P}  \delta}{2} \| \dir{x} \|^2  \\
&\le \alpha_{P} \left( \parObjReductFactor \grad \barrier_{\mu}(x)^T \dir{x} + \| \grad \barrier_{\mu}(x)\|^2 \left( \frac{1}{\mu^2} \left(\frac{L \alpha_{P}^2}{2} + \parObjReductFactor \frac{\alpha_{P}  \delta}{2} \right) - \frac{1 - \parObjReductFactor}{3 L + \mu} \right)  \right) - \parObjReductFactor  \frac{\alpha_{P}  \delta}{2} \| \dir{x} \|^2 \\
&\le \parObjReductFactor \alpha_{P} \left( \grad \barrier_{\mu}(x)^T \dir{x} - \frac{\alpha_{P}  \delta}{2} \| \dir{x} \|^2 - c_{1} \| \grad \barrier_{\mu}(x)\|^2  \right) 
\end{flalign*}
where the first transition holds by Corollary~\ref{coro:bound-everything}, the second by adding and subtracting terms, the third by the above inequalities, the fourth for some constant $c_{1} > 0$ with $\alpha_{P}$ sufficiently small, i.e., any $\alpha_{P} \in (0, c_{1})$.

We can bound $\| \dir{x} \|$, $\| \dir{y} \|$ and $\| \dir{s} \|$ using our bound on $\| \dir{x} \|$, Corollary~\ref{coro:bound-everything} and \eqref{compute-ds-dy}.

Finally,
\begin{flalign*}
\| S^{+} y^{+} - \mu \|_{\infty}  &= \| (s + \dir{s}) y^{+} - \mu + (s^{+} - s - \dir{s}) y^{+} \|_{\infty} \\
&\le \| S y - \mu \ones \|_{\infty} + \alpha_{P} \left( -\| S y - \mu \ones \|_{\infty} + \alpha_{P} L \| \dir{x} \|^2 \| y^{+} \|_{\infty} + \alpha_{P} \| \dir{s} \|_{\infty} \| \dir{y} \|_{\infty} \right) \\
&\le  \| S y - \mu \ones \|_{\infty} (1 - \alpha_{P}  + c_{2} \alpha_{P}^2 )
\end{flalign*}
where the second transition holds by $S y + S \dir{y} + Y \dir{s} = \mu$ and Corollary~\ref{coro:bound-everything} which shows $\| s + \dir{s} - s^{+}  \|_{\infty} \le L \| \dir{x} \|^2$, the second inequality by the fact that the directions and $\| y \|$ are bounded.

Using $\MeritComp_{\mu}(s,y) := \frac{\| S y - \mu \ones \|_{\infty}^3}{\mu^2}$, the previous expression and the boundedness of $s$ and $y$ we get
$$
\MeritComp_{\mu}(s^{+},y^{+}) \le \MeritComp_{\mu}(s,y) (1 - \alpha_{P}) + \bar{c}_{2} \alpha_{P}^2
$$
for some constant $\bar{c}_{2} > 0$. Defining
$$
\Upsilon(\alpha_{P}) := \alpha_{P} \parObjReductFactor \left( \frac{1}{2} \left( \grad \psi_{\mu}(x)^T  \dir{x} - \frac{\delta}{2} \alpha_{P} \norm{ \dir{x}}^2 \right) -  \MeritComp_{\mu}(s,y)  \right),
$$
we get for some constants $c_{1}, c_{2}, c_{3} > 0$ that
\begin{flalign*}
\phi_{\mu}(x^{+},y^{+},s^{+}) - \phi_{\mu}(x,y,s) &\le \Upsilon(\alpha_{P}) + \alpha_{P} \left( c_{2} \alpha_{P}  - c_{3} \MeritComp_{\mu}(s,y) - c_{1} \| \grad \barrier_{\mu}(x)\|^2 \right)
\end{flalign*}
by using $\phi_{\mu}(x,y,s) := \psi_{\mu}(x) + \MeritComp_{\mu}(s,y)$ and substituting our upper bounds on $\barrier_{\mu}(x^{+}) - \barrier_{\mu}(x)$ and $\MeritComp_{\mu}(s^{+},y^{+})$.

Since $\max\{ \MeritComp_{\mu}(s,y), \| \grad \barrier_{\mu}(x)\| \}$ is bounded away from zero, we deduce the largest $\alpha_{P}$ satisfying $\phi_{\mu}(x^{+},y^{+},s^{+}) - \phi_{\mu}(x,y,s) \le \Upsilon(\alpha_{P})$ is bounded away from zero and below by zero. We conclude that we must reduce $\phi_{\mu}$ by a constant amount each iteration, which means that if there is an infinite sequence of stabilization steps then $\phi_{\mu}(x^k, s^k, y^k) \rightarrow -\infty$ and hence $\| x^k \| \rightarrow \infty$.
\end{proof}

\section{Further implementation details}

\subsection{Matrix factorization strategy}\label{sec:mat-fact}

This strategy is based on the ideas of IPOPT \cite[Algorithm IC]{wachter2006implementation}.

\begin{algorithm}[H]
\textbf{Input:} The matrix $\Schur$ and previous delta choice $\delta \ge 0$ \\
\textbf{Output:} The factorization of $\Schur +  \delta \eye$ for some $\delta > 0$ such that the matrix $\Schur +  \delta \eye$ is positive definite.
\begin{enumerate}[label*=A.{\arabic*}]
\item Set $\deltaPrev \gets \delta$.
\item Compute $\tau \gets \min_i \Schur_{i,i}$. If $\tau \le 0$ go to line~\ref{line:new-delta}.
\item Set $\delta \gets 0$, $\tau \gets 0$.
\item Perform Cholesky factorization of $\Schur$, if the factorization succeeds then return factorization of $\Schur$.
\item\label{line:new-delta} Set $\delta \gets \max\{ \deltaPrev \parDeltaDecrease, \parDeltaMin - \tau \}$.
\item If $\delta \ge \parDeltaMax$ then terminate the algorithm with $\status = \failure$.
\item Perform Cholesky factorization of $\Schur + \delta \eye$,  if the factorization succeeds then return factorization of $\Schur + \delta \eye$.
\item Set $\delta \gets \parDeltaIncreaseFailure \delta$. Go to previous step.
\end{enumerate}
\caption{Matrix factorization strategy}\label{alg:mat-fact}
\end{algorithm}
where $\parDeltaDecrease, \parDeltaIncreaseFailure, \parDeltaMin, \parDeltaMax$ have default values of $\parDeltaDecreaseValue, \parDeltaIncreaseFailureValue, \parDeltaMinValue, \parDeltaMaxValue$ respectively.

\subsection{Initialization}\label{sec:initialization}

This section explains how to select initial variable values $(\mu^0,x^0, y^0, s^0)$ to pass to Algorithm~\ref{practical-one-phase-IPM}, given a suggested starting point $x_{\text{start}}$.

The first goal is to modify $x_{\text{start}}$ to satisfy any bounds e.g., $l \le x \le u$. Let $\mathcal{B} \subseteq \{ 1, \dots, \ncon \}$ be the set of indices corresponding to variable bounds. More precisely, $i \in \mathcal{B}$ if and only if there exists $c \in \R$ and $j \in \{ 1, \dots, \nvar \}$ such that $a_i(x) = x_j + c$ or $a_i(x) = -x_j + c$. We project $x_{\text{start}}$ onto the variable bounds in the same way as IPOPT \cite[Section 3.7]{wachter2006implementation}. Furthermore, we set $w_i = 0$ and $s_i^{0} = -a_i(x^0)$ for each $i \in \mathcal{B}$. This ensures that the variable bounds are satisfied throughout, and is useful because the constraints or objective may not be defined outside the bound constraints. To guarantee that any constraint $a_i(x)$ that was strictly feasible at the initial point $x^{0}$ remains feasible throughout the algorithm we could simply set $\conWeight_i = 0$ and $s_i^{0} = -a_i(x^0)$.
 
The remainder of the initialization scheme is inspired by Mehrotra's scheme for linear programming \cite[Section 7]{mehrotra1992implementation} and the scheme of \citet*{gertz2004starting} for nonlinear programming. %
Set
\begin{flalign*}
\tilde{y} &\gets \ones \\
\tilde{s} &\gets -\cons(x^{0}) + \max\{  -2 \min_i\{ s_i \}, \parInitialize \}
\end{flalign*}
for some parameter $\parInitialize \in (0,\infty)$ with default value $\parInitializeValue$. Then factorize $\Schur + I \delta$ as per lines~\ref{line:init-delta} to \ref{line:factor-schur} of Algorithm~\ref{practical-one-phase-IPM}. Find directions $(d_{x}, d_{y}, d_{s})$ via \eqref{practical-direction} and set $\tilde{y} \gets \tilde{y} + d_y$ and $\tilde{s} = - \cons(x^{0})$.
Next, we set:
\begin{flalign*}
 \varepsilon_{y} &\gets \max\{  -2 \min_i{ \tilde{y}_i}, 0 \}  \\
\tilde{y} &\gets \tilde{y} + \varepsilon_{y} \\
 \varepsilon_{s} &\gets \max\left\{  - 2 \min_i{ \tilde{s}_i}, \frac{ \| \grad \Lag_{0}(x^0,\tilde{y}) \|_{\infty} }{\| \tilde{y} \| + 1} \right\} \\
\tilde{s}_i &\gets \tilde{s}_i + \varepsilon_{s} \quad \forall i \not\in \mathcal{B},  
\end{flalign*}
and then
\begin{flalign*}
\tilde{y} &\gets \tilde{y} +  \frac{\tilde{s}^T \tilde{y}}{2 e^T \tilde{s}} \\
\tilde{y} &\gets \max\{  \parInitializeMin, \min\{ \tilde{y},  \parInitializeMax \} \}  \\
\tilde{s}_i &\gets \tilde{s}_i + \frac{\tilde{s}^T \tilde{y}}{2 e^T \tilde{y}} \quad \forall i \not\in \mathcal{B} \\
\tilde{\mu} &\gets  \frac{\tilde{s}^T \tilde{y}}{\ncon}.
\end{flalign*}
where $\parInitializeMin \in \parInitializeMinInterval$ has a default value of $\parInitializeMinValue$ and $\parInitializeMax \in \parInitializeMaxInterval$ has a default value of $\parInitializeMaxValue$.

Next, set
\begin{flalign*}
\mu^0 &\gets \parMuScale \tilde{\mu} \\
s^0 &\gets \tilde{s} \\
\conWeight &\gets \frac{a(x^0) + s^0}{\mu^{0}}
\end{flalign*}
where $\parMuScale \in \parMuScaleInterval$ is a parameter with a default value of 1.0. We leave $\parMuScale$ as a parameter for the user because we notice that for some problems changing this value can reduce the iteration count by an order of magnitude. Devising a better way to select $\mu^0$ we believe could significantly improve our algorithm. Finally, we need to ensure the dual variables satisfy \eqref{eq:comp-slack} so we set
\begin{flalign*}
y^0 &\gets \min \{ \max \{  \parCompAgg \mu^0 (S^0)^{-1} e, \tilde{y} \},  \mu / \parCompAgg (S^0)^{-1} e \}.
\end{flalign*}

\section{Details for IPOPT experiments}

We used the following options with IPOPT:

\begin{enumerate}
\item tol: $10^{-6}$
\item max\_iter: $3000$
\item max\_cpu\_time: $3600$
\item nlp\_scaling\_method: `none'
\item bound\_relax\_factor: $0.0$
\item acceptable\_iter: $999999$
\end{enumerate}
All other options remained at their default values.

\end{document}